\title{Category of $n$-weak injective and $n$-weak
	flat modules with respect to special super presented modules}
\date{}
\author{}
\newtheorem{thm}{Theorem}[section]
 \newtheorem{cor}[thm]{Corollary}
 \newtheorem{lem}[thm]{Lemma}
 \newtheorem{prop}[thm]{Proposition}
 \newtheorem{Def}[thm]{Definition}
\newtheorem{rem}[thm]{Remark}
 \newtheorem{ex}[thm]{Example}
\def\Mod{{\rm Mod}}
\newcommand{\X}{\rm \mathscr{X}}
\newcommand{\Y}{\rm \mathscr{Y}}
\newcommand{\F}{{\cal F}}
\begin{document}

\thispagestyle{empty}

\maketitle \vspace*{-1.5cm}
\begin{center}{\large\bf Mostafa Amini$^{1,a}$, Houda Amzil$^{2,b}$ and  Driss   Bennis$^{2.c}$ }
\bigskip

\small{1. Department of Mathematics, Faculty of Sciences, Payame Noor University, Tehran, Iran.\\
2. Department of Mathematics, Faculty of Sciences, Mohammed V University in Rabat, Morocco.\\
$\mathbf{a.}$ amini.pnu1356@gmail.com \\
$\mathbf{b.}$ houda.amzil@um5r.ac.ma; ha015@inlumine.ual.es \\
$\mathbf{c.}$  driss.bennis@um5.ac.ma; driss$\_$bennis@hotmail.com \\}

\small{$\mathbf{}$  
}
\end{center}

\bigskip

\noindent{\large\bf Abstract.} Let $R$ be a  ring and $n$, $k$ two non-negative integers. In this paper,  we introduce the concepts of $n$-weak injective and $n$-weak flat modules and via the
notion of special super finitely presented modules, we obtain some  characterizations of these modules. We also investigate two classes of
modules with richer contents, namely  $\mathcal{WI}_k^n(R)$ and $\mathcal{WF}_k^n(R^{op})$ which are larger than that of modules  with  weak injective
and weak flat dimensions at most $k$. Then over any arbitrary ring, we study the existence of $\mathcal{WI}_k^n(R)$ and $\mathcal{WF}_k^n(R^{op})$ covers and
preenvelopes. \bigskip

\small{\noindent{\bf Keywords:}    }
$n$-Weak injective module; $n$-weak flat module; $n$-super finitely presented; $n$-weak flat-cover; $n$-weak flat-envelope.\medskip

\small{\noindent{\bf 2010 Mathematics Subject Classification.} 16D80, 16E05, 16E30, 16E65, 16P70}
\bigskip\bigskip
%

\section{Introduction and Preliminaries}
\ \ \  Injectivity of modules is one of the principal notions in homological algebra. Namely, over Noetherian rings, injective modules have very important properties as well as many applications since the classical Matlis' work (see \cite{Matlis}). Stenstr\"{o}m introduced FP-injective modules and studied it over coherent rings as a counterpart to injective modules over Noetherian rings (see \cite{Su}).  Recall that a left
$R$-module $M$ is called {\it FP-injective} if ${\rm Ext}_{R}^{1}(U, M)=0$ for any finitely presented left
$R$-module $U$. Accordingly, the FP-injective dimension of $M$, denoted by FP-${\rm id}_{R}(M)$, is defined to be the smallest $n\geq 0$ such that ${\rm Ext}_{R}^{n+1}(U, M)=0$ for all finitely presented
left $R$-modules $U$. If no such $n$ exists, one defines FP-${\rm id}_{R}(M)=\infty$.  For background on
FP-injective (or absolutely pure) modules, we refer the reader to \cite{EM,BH,LM1,LD,BC,QD,PK,Su}. 

 Recall that  coherent rings first appeared in Chase's paper \cite{Chase} without being mentioned by name. 
The term coherent was first used by Bourbaki in \cite{Bou}. Then,   $n$-coherent rings  were introduced by Costa in \cite{Costa}. Let $n$ be a non-negative integer.  A left $R$-module   $M$ is said to be {\it $n$-presented} if there is an exact sequence 
 $ F_{n}\rightarrow F_{n-1}\rightarrow\dots\rightarrow F_1\rightarrow F_0\rightarrow
M\rightarrow$ of   left $R$-modules, where each $F_i$ is finitely generated and free. And a ring $R$ is called left {\it $n$-coherent} if every
$n$-presented  $R$-module is $(n + 1)$-presented. Thus,  for $n=1$,  left $n$-coherent rings  are nothing but left coherent rings (see \cite{Costa, DKM, Su}). 

In 2015, Wei and Zhang in \cite{JZW}, introduced the notion of $fp_{n}$-injective modules as a generalization of the notion of FP-injective modules by using $n$-presented modules. They also introduce $fp_{n}$-flat modules. Namely, a left $R$-module $M$ is said to be {\it $fp_{n}$-injective} if for every exact sequence $0\rightarrow X\rightarrow Y$ with $X$ and $Y$ $n$-presented left $R$-modules, the induced sequence ${\rm Hom}(Y,M)\rightarrow {\rm Hom}(X,M)\rightarrow 0$ is exact. And a right $R$-module $N$ is said to be {\it $fp_{n}$-flat} if for every exact sequence $0\rightarrow X\rightarrow Y$ with
$X$ and $Y$ are $n$-presented left $R$-modules, the induced sequence $0\rightarrow N\otimes_{R}X\rightarrow N\otimes_{R}Y$ is exact. They investigated the properties of these modules and in particular, proved the existence of $fp_{n}I$-covers (respectively preenvelopes) and $fp_{n}F$-covers (respectively preenvelopes), where $fp_{n}I$ and $fp_{n}F$ denote respectively the classes of $fp_{n}$-injective and $fp_{n}$-flat modules (see \cite[Theorem 2.5]{JZW}).  On the other hand, Bravo et al. in  \cite{BGH} introduced the notion  of absolutely clean and level modules, which Gao and Wang in \cite{Z.W} named weak injective and weak flat, respectively.    

In this paper, we deal with weak injective and weak flat modules and some extensions of these notions. 
 In 2015, Gao and Wang in \cite{Z.W}, using  super finitely presented modules instead of finitely presented modules \cite{Z.G}, introduced the notion  of
weak injective and weak flat modules. In general,  weak injective and weak flat modules are generalizations of $fp_n$-injective and $fp_n$-flat modules, respectively. A left $R$-module $U$ is called {\it super finitely presented} \cite{Z.G} if there exists an exact sequence
$ \cdots\rightarrow  F_{2}\rightarrow  F_1\rightarrow  F_0\rightarrow  U\rightarrow  0$, where each $F_i$
is finitely generated and free.  A left $R$-module $M$ is called  {\it weak injective} if ${\rm Ext}_{R}^{1}(U, M)=0$ for any super finitely presented left $R$-module $U$. A right  $R$-module $M$ is called  {\it weak flat} if ${\rm Tor}_{1}^{R}(M, U)=0$ for any super finitely presented left $R$-module $U$. 

Using weak injective and weak flat modules, several authors investigated the homological aspect of some notions over arbitrary rings, generalizing by this some known results over coherent rings. For example, in 2018, Zhao in \cite{.NG} investigated the homological aspect of modules with finite
weak injective and weak flat dimensions. Namely, if $\mathcal{WI}_k(R)$ and $\mathcal{WF}_k(R^{op})$  are the classes of left and right modules with 
weak injective dimension and weak flat dimensions at most $k$, respectively,  then by using derived functors ${\rm Ext}^{\mathcal{WF}}$, ${\rm Ext}^{\mathcal{WI}}$ and ${\rm Tor}_{\mathcal{W}}$ on $\mathcal{WF}(R^{op})$-resolutions and $\mathcal{WI}(R)$-resolutions, it was proved that the classes $\mathcal{WI}_k(R)$ and $\mathcal{WF}_k(R^{op})$ are  covering and preenveloping over any arbitrary ring, where  $\mathcal{WI}(R)$ and $\mathcal{WF}(R^{op})$ are the class of weak injective left modules and weak flat right modules, respectively. When $k=0$ and for coherent rings, every module has an FP-injective cover and an FP-injective preenvelope. In the recent years, the homological theory for weak injective and weak flat modules 
 has become an important area of research (see \cite{Z.H,Z.W,TX}).

Let $n,k$ be  non-negative integers. In this paper, we introduce the concept of $n$-weak injective left modules and $n$-weak flat right modules  by using $n$-super finitely presented  left modules. Every $n$-weak injective (resp. $n$-weak flat) is weak injective (resp. weak flat). And if $n\geq 1$, then $n$-weak injective (resp. $n$-weak flat) modules  are  common generalizations of  weak injective and  $fp_n$-injective (resp. weak flat and $fp_n$-flat) modules. Under these definitions, $n$-super finitely presented,  $n$-weak injective and $n$-weak flat modules are weaker than the usual super finitely presented,  weak injective (resp. $fp_n$-injective) and weak flat (resp. $fp_n$-flat) modules, respectively.
Also, for any  $m\geq n$, every $n$-super finitely presented, $n$-weak injective (resp. $fp_n$-injective) and $n$-weak flat (resp. $fp_n$-flat) modules is $m$-super finitely presented, $m$-weak injective and  $m$-weak flat, respectively. But, $m$-weak injective and  $m$-weak flat $R$-modules need not be  $n$-weak injective (resp. $fp_n$-injective) and $n$-weak flat (resp. $fp_n$-flat) for any $n<m$ (resp. $n\leq m$) (see Example \ref{1.3a}). We also introduce and investigate the classes 
$\mathcal{WI}_k^n(R)$ and $\mathcal{WF}_k^n(R^{op})$ which are larger than the classes $\mathcal{WI}_k(R)$ and $\mathcal{WF}_k(R^{op})$ (resp. $fp_{n}I$ and $fp_{n}F$)  in \cite{.NG} (resp. \cite{JZW}).

The paper is organized as follows:

In Sec. 1, some fundamental notions and some preliminary results are stated.

In Sec. 2, we introduce $n$-super finitely presented, $n$-weak injective left modules and $n$-weak flat right modules. Then, by considering special super finitey presented  $R$-modules of every $n$-super finitely presented left $R$-module, we give some characterizations of these modules. 

In Sec. 3, we give some homological aspects of modules with finite $n$-weak injective and $n$-weak flat dimensions. We let $\mathcal{WI}_{k}^n(R)$ and $\mathcal{WF}_k^n(R^{op})$  denote the classes  of left and right modules with 
$n$-weak injective dimension and $n$-weak flat dimension at most $k$, respectively. Among other results, we prove that over an arbitrary ring, $M$ is in $\mathcal{WF}_k^n(R^{op})$ (resp. $\mathcal{WI}_{k}^n(R)$) if and only if $M^*$ is in $\mathcal{WI}_{k}^n(R)$ (resp. $\mathcal{WF}_k^n(R^{op})$), where $M^{*}={\rm Hom}_{\mathbb{Z}}(M,{\mathbb{Q}}/{\mathbb { Z}})$. Also, considering an exact sequence  $0\rightarrow A\rightarrow B\rightarrow C\rightarrow 0$ of $R$-modules, we show that if $A$ and $ B$ are in $\mathcal{WI}_{ k}^{n}(R)$ then  $C$ is in $\mathcal{WI}_{ k}^{n}(R)$ and if $B$ and $C$ are in $\mathcal{WF}_{ k}^{n}(R^{op})$ then $A$ is in $\mathcal{WF}_{ k}^{n}(R^{op})$.

In  Sec. 4, we show that over an  arbitrary ring, $\mathcal{WI}_{k}^n(R)$ and $\mathcal{WF}_k^n(R^{op})$  are  injectively resolving and  projectively resolving and consequently,
 we prove that the classes  $\mathcal{WI}_{k}^n(R)$ and $\mathcal{WF}_k^n(R^{op})$ are  covering and preenveloping. Then by considering $n=0$,  we deduce that the classes  $\mathcal{WI}_{k}(R)$ and $\mathcal{WF}_k(R^{op})$ are  covering and preenveloping (\cite[Theorems 4.4, 4.5, 4.8 and 4.9]{.NG}). Moreover, if $k=0$, then Theorems 2.5 and 2.7 of \cite{JZW} follow. Also, we  investigate rings over which every left module is in $\mathcal{WI}^n(R)$  and every right module is in $\mathcal{WF}^n(R^{op})$. Finally, we show that the pair $(\mathcal{WF}_{k}^n(R^{op}), \mathcal{WF}_{k}^n(R^{op})^{\bot})$ is a hereditary perfect cotorsion pair, and if $R$ is in $\mathcal{WI}_{k}^n(R)$, it follows that the pair $(\mathcal{WI}_{k}^n(R), \mathcal{WI}_{k}^n(R)^{\bot})$ 
is a perfect cotorsion pair.
\section{$n$-Weak injective and $n$-weak flat  modules}
\ \ \  In this section, we introduce the notions of $n$-weak injective and $n$-weak flat modules using special super finitely presented modules. Then, we show some of their general properties. We start with the following definition.

\begin{Def}\label{1.1} 
Let  $n$ be a non-negative integer. A  left $R$-module $U$ is said to be $n$-super finitely presented if there exists an exact sequence $$ \cdots\rightarrow  F_{n+1}\rightarrow  F_{n}\rightarrow \cdots\rightarrow  F_1\rightarrow  F_0\rightarrow  U\rightarrow  0$$ of projective $R$-modules, where each $F_i$ is  finitely generated and projective for any $i\geq n$.

 If $K_{i}:={\rm Im}(F_{i+1}\rightarrow F_{i})$, then for $i=n-1$, we  call the module $K_{n-1}$ special super finitely presented left $R$-module. Moreover, if ${\rm Hom}_{R}(K_{n-1},-)$ is exact with respect to a short exact sequence 
$0\rightarrow A\rightarrow B\rightarrow C\rightarrow 0$ of left $R$-modules, then we say that this sequence is special superpure and $A$ is said to be superpure in $B$.
\end{Def}

\begin{Def}\label{1.2} 
Let  $n$ be a non-negative integer. A left $R$-module $M$ is called $n$-weak injective if ${\rm Ext}_{R}^{n+1}(U,M)=0$
 for every $n$-super finitely presented left $R$-module $ U$. A right $R$-module $N$ is called $n$-weak flat
if ${\rm Tor}_{n+1}^{R}(N,U)=0$ for every $n$-super finitely presented left $R$-module $U$.
\end{Def}
\begin{rem}\label{1.3}
Let  $n,m,k$  be non-negative integers. Then:
\begin{enumerate}
\item [\rm (1)]
${\rm Ext}_{R}^{n+1}(U,-)\cong{\rm Ext}_{R}^{1}(K_{n-1},-)$ and  ${\rm Tor}_{n+1}^{R}(-,U)\cong {\rm Tor}_{1}^{R}(-,K_{n-1})$, where $U$ is an $n$-super finitely presented left $R$-module with a special super finitely presented module $K_{n-1}$.  If $n=0$, then $n$-weak injective left $R$-modules, $n$-super finitely presented  left $R$-modules and $n$-weak flat right $R$-modules are simply weak injective left $R$-modules, super finitely presented left $R$-module and weak flat right $R$-module, respectively.
\item [\rm (2)]
 Every super finitely presented left $R$-module is $n$-super finitely presented.
\item [\rm (3)]
 Every $n$-super finitely presented left $R$-module is $m$-super finitely presented for any $m\geq n$,  but not conversely (see Examples \ref{1.1a} and \ref{1.3a}(1)). If we denote by ${\rm Pres}_{n}^{\infty}$ the class of all $n$-super finitely presented
 left $R$-modules, then:
$${\rm Pres}_{n}^{\infty}\subseteq{\rm Pres}_{n+1}^{\infty}\subseteq {\rm Pres}_{n+2}^{\infty}\subseteq\cdots$$
If $n=0$, then ${\rm Pres}_{0}^{\infty}$ is simply the class of all super finitely presented  left $R$-modules. We denote this class simply by ${\rm Pres}^{\infty}$.
\item [\rm (4)]
Every $n$-weak injective left (resp. $n$-weak flat right) $R$-module is $m$-weak injective (resp. $m$-weak flat) for any $n\leq m$, but not conversely (see Example \ref{1.3a}(2)). If $U$ is an $(n+1)$-super finitely presented left $R$-module, then there exists an exact sequence
$$ \cdots\rightarrow  F_{2}\rightarrow  F_1\rightarrow  F_0\rightarrow  U\rightarrow  0,$$ where $K_{n}$ is a special super finitely presented left module. Also, we have the short exact sequence $0\rightarrow K_{0}\rightarrow F_0\rightarrow U\rightarrow 0$, where $K_0$ is an $n$-super finitely presented left module. So, if $M$ is an $n$-weak injective left $R$-module, then ${\rm Ext}_{R}^{n+1}(K_0, M)=0$. On the other hand, ${\rm Ext}_{R}^{n+2}(U,M)\cong{\rm Ext}_{R}^{n+1}(K_{0}, M)=0$, and hence $M$ is $(n+1)$-weak injective. Similarly, every $n$-weak flat right $R$-module is $(n+1)$-weak flat. 
\item [\rm (5)]
If $\mathcal{I}$, $\mathcal{FP}$, $\mathcal{WI}(R)$, $\mathcal{WI}^{n}(R)$,  $\mathcal{F}$, $\mathcal{WF}(R^{op})$ and $\mathcal{WF}^n(R^{op})$ are the classes of  injective, FP-injective, weak injective, $n$-weak injective left $R$-modules and flat, weak flat and $n$-weak flat  right  $R$-modules, respectively, then

\ \ \ \ \ \ \ \ \ \ \ \ \ \ \ \ \ \ \ \ \ \ $\mathcal{I}\subseteq\mathcal{FP}\subseteq\mathcal{WI}(R)\subseteq\mathcal{WI}^{n}(R)\subseteq\mathcal{WI}^{n+1}(R)\subseteq\mathcal{WI}^{n+2}(R)\subseteq\cdots$\\ and 

  \ \ \ \ \ \ \ \ \ \ \ \ \ \ \ \ \ \ \ \ \ \ $\mathcal{F}\subseteq\mathcal{WF}(R^{op})\subseteq\mathcal{WF}^{n}(R^{op})\subseteq\mathcal{WF}^{n+1}(R^{op})\subseteq\mathcal{WF}^{n+2}(R^{op})\subseteq\cdots$.
\item [\rm (6)] Every $fp_n$-injective left $R$-module is $n$-weak injective and every $fp_n$-flat right $R$-module is $n$-weak flat. Indeed, for any 
$n$-super finitely presented left module $U$, there exists a short exact sequence $0\rightarrow K_{n}\rightarrow F_{n}\rightarrow K_{n-1}\rightarrow 0$, where $K_{n-1}$ is a special super finitely presented left module. So if $M$ is $fp_n$-injective left $R$-module, then ${\rm Hom}(F_n,M)\rightarrow {\rm Hom}(K_n,M)\rightarrow 0$ is exact, since  $K_{n}$ and $F_{n}$ are super finitely presented and consequently are $n$-presented. Therefore, ${\rm Ext}_{R}^{1}(K_{n-1}, M)=0$ and thus by (1), it follows that $M$ is $n$-weak injective. Similarly, every  $fp_n$-flat right $R$-module is $n$-weak flat, but not conversely (see Example \ref{1.3a}(3)).
\end{enumerate}
\end{rem}
Let $A$ be an $R$-module. Then, the
finitely presented dimension of $A$ denoted by ${\rm f.p.dim}_{R}(A)$ is defined as 
${\rm inf}\{n \ \mid \ {\rm there \ exists \ an \ exact \
sequence }\  {\rm F}_{n+1}\rightarrow {\rm F}_{n}\rightarrow\cdots \rightarrow {\rm F}_1\rightarrow {\rm F}_0\rightarrow {\rm A}\rightarrow 0 \ {\rm of} \ R$-${\rm modules, \ where \ each  \  F_{i}  \ projective, \ and \ F_{n} \ and \  F_{n+1} \ are \ finitely\  generated} \}$.
So ${\rm f.p.dim}(R)$ = ${\rm sup}\{{\rm f.p.dim}_{R}(A) \ \mid \ {\rm A \ is \ a\ finitely\  generated} \ R$-${\rm module} \}$. 
 We use ${\rm w.gl.dim}(R)$ and ${\rm gl.dim}(R)$ to denote the weak global dimension and global dimension of a ring $R$ respectively. 
Also, a ring $R$ is called an $(a,b,c)$-ring,  if ${\rm w.gl.dim}(R)=a$, ${\rm gl.dim}(R)=b$ and ${\rm f.p.dim}(R)=c$ (see \cite{HKN}).
\begin{ex}\label{1.1a}
Let $R=k[x_1, x_2]\oplus R^{'}$, where $k[x_1,x_2]$ is a ring of polynomials in $2$ indeterminate over a field $k$, and $R^{'}$ is a valuation ring with global
dimension $2$. Then by \cite[Proposition 3.10]{HKN}, $R$  is a coherent $(2,2,3)$-ring. So  ${\rm f.p.dim}(R)=3$, and hence there exists a finitely generated $R$-module $U$  such that ${\rm f.p.dim}_{R}(U)=3$. Thus, there exists an exact sequence
$F_{4}\rightarrow  F_{3}\rightarrow F_2\rightarrow F_1\rightarrow F_0\rightarrow U\rightarrow  0,$
where $F_3$ and $F_{4}$ are finitely generated and projective modules. Also, $K_2:={\rm Im}( F_3\rightarrow F_2)$ is special super finitely presented, since $R$ is coherent. So by Definition \ref{1.1}, $U$ is $3$-super finitely presented. But, $U$ is not $2$-super finitely presented otherwise ${\rm f.p.dim}_{R}(U)=2$, a contradiction.
\end{ex}
\begin{ex}\label{1.3a}
Let $R=k[x_1, x_2]\oplus S$, where $k[x_1,x_2]$ is a ring of polynomials in $2$ indeterminates over a field $k$, and $S$ is a
non-Noetherian hereditary von Neumann regular ring  (for example $S$ is a ring of functions of $X$ into $k$ continuous with respect to the discrete topology on $k$, where $k$ is a field and $X$ is a totally disconnected compact Hausdorff space whose associated Boolean ring is hereditary, see examples of \cite{GMB}).
 Then by \cite[Proposition 3.8]{HKN}, $R$  is a coherent $(2,2,2)$-ring. Hence,
\begin{enumerate}
\item [\rm (1)]
Since ${\rm f.p.dim}(R)=2$, then by \cite[Proposition 1.5]{HKN},  for a finitely generated $R$-module $U$ either ${\rm f.p.dim}_{R}(U)=2$ or ${\rm f.p.dim}_{R}(U)=0$. If ${\rm f.p.dim}_{R}(U)= 0$, it follows that $U$ is  finitely presented. If  ${\rm f.p.dim}_{R}(U)=2$, then there exists an exact sequence
$F_{3}\rightarrow  F_{2}\rightarrow F_1\rightarrow F_0\rightarrow U\rightarrow 0,$
where $F_2$ and $F_{3}$ are finitely generated and projective $R$-modules. Also, $K_1:={\rm Im}( F_2\rightarrow F_1)$ is a special super finitely presented module, since $R$ is coherent. So by Definition \ref{1.1}, $U$ is $2$-super finitely presented. But, $U$ is neither $1$-super finitely presented  nor $0$-super finitely presented, otherwise every  finitely generated $R$-module would be  finitely presented, and hence by \cite[Theorem 1.3]{HKN}, ${\rm f.p.dim}(R)=0$,  a  contradiction.
\item [\rm (2)]
It is clear that every $R$-module is $2$-weak injective, since ${\rm gl.dim}(R)=2$. But not every $R$-module is $0$-weak injective. Indeed, if every module $M$ is $0$-weak injective, then ${\rm Ext}_{R}^{1}(U^{'}, M)=0$ for each $0$-super finitely presented $R$-module $U^{'}$. So each $0$-super finitely presented module is projective. But since
 $R$ is coherent, any finitely presented module is super finitely presented, and so any finitely presented module is projective. Hence $R$ is $1$-regular, and then by \cite[Theorem 3.9]{.TZ}, every $R$-module is flat. So ${\rm w.gl.dim}(R)=0$,  a  contradiction. Similarly, since ${\rm w.gl.dim}(R)=2$, it follows that every $R$-module is $2$-weak flat, but not every $R$-module $0$-weak flat.
\item [\rm (3)]
Not every $R$-module is ${\rm fp}_{2}$-injective (resp. ${\rm fp}_{2}$-flat). Indeed, suppose otherwise.  Since $R$ is coherent, it follows that any finitely presented  $R$-module $A$  is super finitely presented. Then, there is an exact sequence $0\rightarrow L_0\rightarrow F_0\rightarrow A\rightarrow 0,$ where $F_0$ and $L_0$ are $n$-presented $R$-modules, and so are also $2$-presented.
So, if $M$ is an ${\rm fp}_{2}$-injective $R$-module, then ${\rm Ext}_{R}^{1}(A, M)=0$ (resp. ${\rm Tor}_{1}^{R} (M, A)=0$) and consequently, $A$ is projective. Hence, $R$ is $1$-regular and thus ${\rm w.gl.dim}(R)=0$,  a  contradiction.
\end{enumerate}
\end{ex}

We denote by $R$-$\Mod$ the category of left $R$-modules and by $\Mod$-$R$ that of right $R$-modules.
\begin{prop}\label{1.4}
Let $n$ be a non-negative integer. Then, the following assertions hold:
\begin{enumerate}
\item [\rm (1)] 
 For every $M\in\Mod$-$R$, $M$ is $n$-weak flat if and only  if  $M^{*}$ is $n$-weak injective.
\item [\rm (2)] 
For every $M\in R$-$\Mod$, $M$ is $n$-weak injective if and only  if  $M^{*}$ is $n$-weak flat.
\end{enumerate}
\end{prop}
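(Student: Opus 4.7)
The argument rests on two natural duality isomorphisms relating $\Tor$ and $\Ext$ to the Pontryagin dual $(-)^{*}=\Hom_{\Z}(-,\Q/\Z)$, combined with the fact that $\Q/\Z$ is an injective cogenerator of the category of abelian groups. Injectivity makes $(-)^{*}$ an exact functor, and the cogenerator property gives that an abelian group $X$ vanishes if and only if $X^{*}$ does. Once these duality isomorphisms are in place on the level of $\Ext^{n+1}$ and $\Tor_{n+1}$, both equivalences in the proposition reduce to $X=0 \iff X^{*}=0$.

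For part (1), I would start from the standard Hom-tensor adjunction, which yields the natural isomorphism $(M\otimes_{R}P)^{*}\cong\Hom_{R}(P,M^{*})$ for any left $R$-module $P$ and any right $R$-module $M$. Taking a projective resolution $P_{\bullet}\rightarrow U$ of an arbitrary $n$-super finitely presented left $R$-module $U$, applying this identity degreewise, and using exactness of $(-)^{*}$ to pull the dual through homology, I obtain $\Ext_{R}^{n+1}(U,M^{*})\cong\Tor_{n+1}^{R}(M,U)^{*}$. Part (1) then follows immediately: $M$ is $n$-weak flat iff every such $\Tor_{n+1}^{R}(M,U)$ vanishes, iff its dual $\Ext_{R}^{n+1}(U,M^{*})$ vanishes, iff $M^{*}$ is $n$-weak injective.

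For part (2) the naive adjunction would give $\Tor_{n+1}^{R}(M^{*},U)^{*}\cong\Ext_{R}^{n+1}(U,M^{**})$, which is the wrong object on the right. Instead I would use the variant identity $M^{*}\otimes_{R}P\cong\Hom_{R}(P,M)^{*}$, which holds whenever $P$ is a finitely generated projective left $R$-module (it is clear for $P=R^{k}$ and extends to direct summands, and it is natural in $P$). Since an $n$-super finitely presented $U$ is not given to admit a resolution by finitely generated projectives in its bottom $n$ degrees, I would first shift the argument to the special super finitely presented kernel $K_{n-1}$ appearing in Definition \ref{1.1}. By Remark \ref{1.3}(1),
\[\Ext_{R}^{n+1}(U,M)\cong \Ext_{R}^{1}(K_{n-1},M),\qquad \Tor_{n+1}^{R}(M^{*},U)\cong \Tor_{1}^{R}(M^{*},K_{n-1}),\]
and $K_{n-1}$ does admit a resolution $\cdots\to F_{n+1}\to F_{n}\to K_{n-1}\to 0$ with each $F_{i}$ finitely generated and projective. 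Applying the f.g.-projective identity to this resolution and using exactness of $(-)^{*}$ then gives $\Tor_{1}^{R}(M^{*},K_{n-1})\cong \Ext_{R}^{1}(K_{n-1},M)^{*}$, whence $\Tor_{n+1}^{R}(M^{*},U)\cong \Ext_{R}^{n+1}(U,M)^{*}$. The cogenerator property of $\Q/\Z$ closes part (2).

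The main obstacle is exactly the asymmetry in part (2): the direct Hom-tensor adjunction carries $M$ to $M^{**}$, and the right object appears only after invoking Remark \ref{1.3}(1) to push the computation onto $K_{n-1}$, where the finite generation of the projectives in the resolution is precisely what is needed to validate the identity $M^{*}\otimes_{R}P\cong\Hom_{R}(P,M)^{*}$ termwise. The rest of the argument is a routine homological translation.
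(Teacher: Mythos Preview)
Your proposal is correct and follows essentially the same approach as the paper: both rely on the adjunction isomorphism $(M\otimes_{R}P)^{*}\cong\Hom_{R}(P,M^{*})$ for part (1), the finitely-presented variant $M^{*}\otimes_{R}P\cong\Hom_{R}(P,M)^{*}$ for part (2), and the reduction to $K_{n-1}$ via Remark \ref{1.3}(1) so that the latter identity applies along a resolution by finitely generated projectives. The only cosmetic difference is that the paper also passes through $K_{n-1}$ in part (1), whereas you apply the adjunction directly to a projective resolution of $U$; your route is slightly cleaner there since the adjunction in (1) needs no finiteness hypothesis.
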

\begin{proof}
(1) Let $U$ be an $n$-super finitely presented left $R$-module with special super finitely presented module $K_{n-1}$.  Then, ${\rm Tor}_{1}^{R}(M,K_{n-1})^{*}\cong{\rm Ext}_{R}^{1}(K_{n-1}, M^*)$, since by \cite[Theorem 2.76]{Rot2}, 
${\rm Hom}_{\mathbb{Z}}(M\otimes_{R}K_{n-1},{\mathbb{Q}}/{\mathbb{Z}})\cong{\rm Hom}_{R}(K_{n-1},{\rm Hom}_{\mathbb{Z}}(M,{\mathbb{Q}}/{\mathbb{Z}}))$. The result follows from  Remark \ref{1.3}(1), since ${\rm Ext}_{R}^{n+1}(U,M^*)\cong{\rm Ext}_{R}^{1}(K_{n-1}, M^*)\cong{\rm Tor}_{1}^{R}(M,K_{n-1})^{*}\cong{\rm Tor}_{n+1}^{R}(M,U)^{*}$.

(2) Let $U$ be an $n$-super finitely presented left $R$-module with  special super finitely presented module $K_{n-1}$. Then, ${\rm Ext}_{R}^{1}(K_{n-1}, M)^{*}\cong{\rm Tor}_{1}^{R}(M^*,K_{n-1})$, since  by \cite[Lemma 3.55]{Rot2}, we have that $M^{*}\otimes_{R}K_{n-1}\cong{\rm Hom}_{R}(K_{n-1}, M)^*$. The result follows from  Remark \ref{1.3}(1), since ${\rm Tor}_{n+1}^{R}(M^*,U)\cong{\rm Tor}_{1}^{R}(M^*,K_{n-1})\cong{\rm Ext}_{R}^{1}(K_{n-1}, M)^{*}\cong{\rm Ext}_{R}^{n+1}(U, M)^{*}.$ \end{proof}

As a direct consequence of Proposition \ref{1.4} we obtain the following corollary.
\begin{cor}\label{1.4a}
Let $M$ be an  $R$-module. Then,
\begin{enumerate}
\item [\rm (1)] 
For every $M\in R$-$\Mod$, $M$ is $n$-weak injective if and only  if  $M^{**}$ is $n$-weak injective.
\item [\rm (2)] 
For every $M\in\Mod$-$R$, $M$ is $n$-weak flat if and only  if  $M^{**}$ is $n$-weak flat.
\end{enumerate}
\end{cor}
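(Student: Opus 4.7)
The plan is to obtain both assertions by simply chaining together the two biconditionals of Proposition \ref{1.4}, exploiting the fact that the functor $(-)^{*} = \Hom_{\mathbb{Z}}(-, \mathbb{Q}/\mathbb{Z})$ interchanges left and right $R$-modules. Since Proposition \ref{1.4} was already proved as an equivalence in both directions, no additional homological work should be required.

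For assertion (1), I would first invoke Proposition \ref{1.4}(2) applied to the left $R$-module $M$: this gives that $M$ is $n$-weak injective if and only if the right $R$-module $M^{*}$ is $n$-weak flat. Then I would invoke Proposition \ref{1.4}(1) applied to the right $R$-module $N := M^{*}$: this gives that $N = M^{*}$ is $n$-weak flat if and only if $N^{*} = M^{**}$ is $n$-weak injective as a left $R$-module. Concatenating these two equivalences yields the corollary.

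Assertion (2) is obtained by the symmetric argument, swapping the roles of the two parts of Proposition \ref{1.4}: apply Proposition \ref{1.4}(1) to $M \in \Mod$-$R$ to get that $M$ is $n$-weak flat if and only if the left $R$-module $M^{*}$ is $n$-weak injective, then apply Proposition \ref{1.4}(2) to $M^{*}$ to get that $M^{*}$ is $n$-weak injective if and only if $M^{**}$ is $n$-weak flat as a right $R$-module. Composing the two biconditionals gives the statement.

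I do not foresee a substantive obstacle: both halves of Proposition \ref{1.4} are genuine ``if and only if'' statements, so both directions of each equivalence in the corollary come at once. The only thing to be careful about is the sidedness bookkeeping, namely keeping track that each successive application of the character dual $(-)^{*}$ swaps left and right, so that in each step the correct variant of Proposition \ref{1.4} is used; apart from this, the argument is a purely formal consequence of Proposition \ref{1.4}.
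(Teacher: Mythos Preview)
Your proposal is correct and matches the paper's approach exactly: the paper simply states that Corollary \ref{1.4a} is ``a direct consequence of Proposition \ref{1.4}'' without giving further details, and the chaining of the two biconditionals in Proposition \ref{1.4} that you describe is precisely that direct consequence.
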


\begin{prop}\label{1.10}
Let $M$ be a left $R$-module. Then, the following assertions are equivalent:
\begin{enumerate}
\item [\rm (1)] 
 $M$  is $n$-weak injective.
\item [\rm (2)] 
 Every short exact sequence $0\rightarrow M\rightarrow B\rightarrow C\rightarrow 0$ of left $R$-modules  is  special superpure.
\item [\rm (3)] 
$M$ is special  superpure in any  left $R$-module containing it.
\item [\rm (4)] 
$M$ is special  superpure in any injective  left $R$-module containing it.
\item [\rm (5)]
$M$ is special  superpure in $E(M)$.
\end{enumerate}
\end{prop}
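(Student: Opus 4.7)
The plan is to prove the cyclic chain of implications $(1)\Rightarrow(2)\Rightarrow(3)\Rightarrow(4)\Rightarrow(5)\Rightarrow(1)$, leveraging the dimension shift from Remark \ref{1.3}(1), which identifies $\Ext_R^{n+1}(U,M)$ with $\Ext_R^{1}(K_{n-1},M)$ for any $n$-super finitely presented $U$ with associated special super finitely presented module $K_{n-1}$. Once this identification is in hand, the whole proposition reduces to the classical characterizations of injectivity relative to the class of special super finitely presented modules.

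For $(1)\Rightarrow(2)$, I would pick any short exact sequence $0\to M\to B\to C\to 0$ and any special super finitely presented module $K_{n-1}$, which by Definition \ref{1.1} fits into an exact sequence coming from an $n$-super finitely presented $U$. Applying $\Hom_R(K_{n-1},-)$ yields a long exact sequence whose only obstruction to exactness is $\Ext_R^1(K_{n-1},M)$; by Remark \ref{1.3}(1) this equals $\Ext_R^{n+1}(U,M)$, which vanishes by hypothesis. Hence the sequence is special superpure, giving (2). The steps $(2)\Rightarrow(3)\Rightarrow(4)\Rightarrow(5)$ are essentially tautological: (3) is just (2) applied to the canonical embedding of $M$ into an arbitrary containing module $N$ via $0\to M\to N\to N/M\to 0$; (4) specializes $N$ to injective modules; and (5) specializes further to the injective envelope $E(M)$.

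The crucial implication is $(5)\Rightarrow(1)$. Here I would consider the short exact sequence $0\to M\to E(M)\to E(M)/M\to 0$ and apply $\Hom_R(K_{n-1},-)$ for an arbitrary special super finitely presented module $K_{n-1}$ associated to an $n$-super finitely presented module $U$. Special superpurity of $M$ in $E(M)$ means $\Hom_R(K_{n-1},E(M))\to \Hom_R(K_{n-1},E(M)/M)\to 0$ is exact, and combining with the long exact sequence of $\Ext$ together with $\Ext_R^1(K_{n-1},E(M))=0$ (injectivity of $E(M)$) forces $\Ext_R^1(K_{n-1},M)=0$. By Remark \ref{1.3}(1), $\Ext_R^{n+1}(U,M)\cong\Ext_R^1(K_{n-1},M)=0$ for every $n$-super finitely presented $U$, establishing that $M$ is $n$-weak injective.

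The proof is essentially formal once the dimension shift of Remark \ref{1.3}(1) is in place; I do not anticipate a serious obstacle, but the one subtle point to watch is to check that every special super finitely presented module arises as some $K_{n-1}$ from some $n$-super finitely presented $U$ (this is built into Definition \ref{1.1}), so that the test class in the definition of special superpurity matches exactly the test class governing $n$-weak injectivity.
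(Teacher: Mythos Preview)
Your proposal is correct and follows essentially the same approach as the paper: both prove the cycle $(1)\Rightarrow(2)\Rightarrow(3)\Rightarrow(4)\Rightarrow(5)\Rightarrow(1)$ using the dimension-shift isomorphism of Remark \ref{1.3}(1) for $(1)\Rightarrow(2)$ and $(5)\Rightarrow(1)$, with the intermediate implications being trivial specializations. Your version simply spells out the long exact sequence argument in $(5)\Rightarrow(1)$ (using injectivity of $E(M)$ to kill $\Ext_R^1(K_{n-1},E(M))$) a bit more explicitly than the paper does.
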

\begin{proof}
$(1)\Rightarrow  (2)$ Let $U$ be an $n$-super finitely presented left $R$-module with  special super finitely presented module $K_{n-1}$. Then by Remark \ref{1.3}(1), ${\rm Ext}_{R}^{n+1}(U,M)\cong{\rm Ext}_{R}^{1}(K_{n-1},M)=0$. Consequently, ${\rm Hom}_{R}(K_{n-1}, -)$ is exact with respect to the  short exact sequence $0\rightarrow M\rightarrow B\rightarrow C\rightarrow 0$.

$(2)\Rightarrow  (3)\Rightarrow  (4) \Rightarrow  (5)$ Clear.

$(5)\Rightarrow  (1)$ The  short exact sequence $0\rightarrow M\rightarrow E(M)\rightarrow {E(M)}/{M}\rightarrow 0$  is  special superpure. Therefore, if $U$ is an $n$-super finitely presented left $R$-module with  special super finitely presented module $K_{n-1}$, then by assumption and Remark \ref{1.3}(1),
$0={\rm Ext}_{R}^{1}(K_{n-1},M)\cong{\rm Ext}_{R}^{n+1}(U,M)$ and hence $M$  is $n$-weak injective.
\end{proof}
\begin{prop}\label{1.5}
	
	 Let $n$ be a non-negative integer. Then, the following assertions hold:
\begin{enumerate}
	\item [\rm (1)]  Let $\{M_{i}\}_{i\in I}$ be a family of left $R$-modules. Then
	$\prod_{i\in I} M_{i}$ is $n$-weak injective if and only if each $M_i$ is  $n$-weak injective.
	\item  [\rm (2)]  Let $\{M_{i}\}_{i\in I}$ be a family of right $R$-modules. Then
	$\bigoplus_{i\in I} M_{i}$ is $n$-weak flat if and only if each $M_i$ is  $n$-weak flat.
\end{enumerate}
\end{prop}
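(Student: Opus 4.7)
The plan is to reduce both statements to the standard facts that $\Ext^{n+1}_R(U,-)$ converts direct products into direct products and $\Tor_{n+1}^R(-,U)$ converts direct sums into direct sums, together with the observation that a product (resp.\ direct sum) of abelian groups vanishes if and only if each factor (resp.\ summand) does.

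For part (1), I would fix an arbitrary $n$-super finitely presented left $R$-module $U$ with associated special super finitely presented module $K_{n-1}$. By Remark~\ref{1.3}(1) there is a natural isomorphism $\Ext_R^{n+1}(U,-)\cong \Ext_R^1(K_{n-1},-)$, so testing $n$-weak injectivity of any module $N$ is equivalent to verifying $\Ext_R^1(K_{n-1},N)=0$. Applied to $N=\prod_{i\in I}M_i$, the standard natural isomorphism
\[
\Ext_R^1\!\Bigl(K_{n-1},\prod_{i\in I}M_i\Bigr)\cong \prod_{i\in I}\Ext_R^1(K_{n-1},M_i)
\]
shows that this group is zero if and only if every $\Ext_R^1(K_{n-1},M_i)$ is zero, i.e.\ if and only if every $M_i$ is $n$-weak injective. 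Since $U$ was arbitrary, this proves (1).

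For part (2), the argument is dual. Fix an $n$-super finitely presented left module $U$ with special super finitely presented module $K_{n-1}$. Again by Remark~\ref{1.3}(1), checking $n$-weak flatness of a right module $N$ reduces to checking $\Tor_1^R(N,K_{n-1})=0$. For $N=\bigoplus_{i\in I}M_i$, the natural isomorphism
\[
\Tor_1^R\!\Bigl(\bigoplus_{i\in I}M_i,\,K_{n-1}\Bigr)\cong \bigoplus_{i\in I}\Tor_1^R(M_i,K_{n-1})
\]
(which follows since tensor product commutes with direct sums and homology commutes with filtered colimits, hence with arbitrary direct sums) yields the equivalence: the left side vanishes if and only if each summand on the right vanishes, i.e.\ iff each $M_i$ is $n$-weak flat.

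There is no real obstacle here; the statement is essentially a bookkeeping consequence of the behaviour of $\Ext$ and $\Tor$ under products and coproducts. The only point worth being careful about is to invoke Remark~\ref{1.3}(1) first so that the commutation statements are needed only for $\Ext^1$ and $\Tor_1$, where they are entirely standard and require no finiteness hypothesis on $K_{n-1}$.
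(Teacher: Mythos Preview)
Your proof is correct and follows essentially the same idea as the paper's: both rely on the standard commutation of $\Ext$ with products in the second variable and of $\Tor$ with direct sums (the paper simply cites \cite[Propositions~7.6 and~7.22]{Rot2}). Your preliminary reduction via Remark~\ref{1.3}(1) to $\Ext^1$ and $\Tor_1$ is harmless but unnecessary, since the isomorphisms $\Ext_R^{n+1}(U,\prod_i M_i)\cong\prod_i\Ext_R^{n+1}(U,M_i)$ and $\Tor_{n+1}^R(\bigoplus_i M_i,U)\cong\bigoplus_i\Tor_{n+1}^R(M_i,U)$ already hold for arbitrary $U$ and all degrees without any finiteness hypothesis.
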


\begin{proof} Both assertions follow easily from \cite[Propositions 7.6 and 7.22]{Rot2}.
\end{proof}

\begin{prop}\label{1.5a}
Let $n$ be a non-negative integer and let $\{M_{i}\}_{i\in I}$ be a family of left $R$-modules. Then
$\bigoplus_{i\in I} M_{i}$ is $n$-weak injective if and only if each $M_i$ is  $n$-weak injective.
\end{prop}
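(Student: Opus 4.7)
The plan is to reduce the statement to the claim that the functor $\Ext_R^{n+1}(U,-)$ commutes with arbitrary direct sums whenever $U$ is $n$-super finitely presented. Once this is established, the forward direction follows because each $M_i$ is a direct summand of $\bigoplus_{i\in I}M_i$, so the vanishing of $\Ext_R^{n+1}(U,\bigoplus_{i\in I}M_i)$ transfers to each summand; and the reverse direction follows because the vanishing of each $\Ext_R^{n+1}(U,M_i)$ gives the vanishing of their direct sum, which coincides with $\Ext_R^{n+1}(U,\bigoplus_{i\in I}M_i)$.

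The key technical observation I would establish is that the special super finitely presented module $K_{n-1}$ attached to $U$ admits a resolution by \emph{finitely generated} projective modules. Indeed, Definition \ref{1.1} furnishes a resolution of $U$ in which $F_i$ is finitely generated projective for all $i\geq n$, and truncating we obtain an exact sequence
\[
\cdots \longrightarrow F_{n+2}\longrightarrow F_{n+1}\longrightarrow F_n\longrightarrow K_{n-1}\longrightarrow 0,
\]
where every $F_{n+j}$ is finitely generated projective. Hence $K_{n-1}$ is super finitely presented, and in particular it possesses a projective resolution $P_\bullet\to K_{n-1}$ with each $P_j$ finitely generated.

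Using this resolution to compute $\Ext_R^{1}(K_{n-1},-)$, and invoking the standard fact that $\Hom_R(P,-)$ commutes with arbitrary direct sums whenever $P$ is finitely generated (since a morphism from a finitely generated module into a direct sum factors through a finite subsum), I obtain the natural isomorphism
\[
\Ext_R^{1}\Bigl(K_{n-1},\,\bigoplus_{i\in I}M_i\Bigr)\;\cong\;\bigoplus_{i\in I}\Ext_R^{1}(K_{n-1},M_i).
\]
Combining this with the isomorphism $\Ext_R^{n+1}(U,-)\cong \Ext_R^{1}(K_{n-1},-)$ from Remark \ref{1.3}(1) yields the desired commutation of $\Ext_R^{n+1}(U,-)$ with direct sums, and both implications of the proposition follow at once.

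I do not expect a real obstacle here; the only step that requires care is verifying that $K_{n-1}$ is itself super finitely presented (so that $\Ext_R^{1}(K_{n-1},-)$ can be computed via a finitely generated projective resolution), which is essentially immediate from the definition once the resolution of $U$ is shifted one step to the right.
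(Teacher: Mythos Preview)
Your proposal is correct and follows essentially the same route as the paper: both arguments reduce to showing $\Ext_R^{1}(K_{n-1},-)$ commutes with direct sums by exploiting that $K_{n-1}$ is super finitely presented, and then invoke Remark~\ref{1.3}(1). The only cosmetic difference is that the paper writes out the comparison via the explicit short exact sequence $0\to K_n\to F_n\to K_{n-1}\to 0$ and a commutative diagram, whereas you phrase it in terms of the full finitely generated projective resolution of $K_{n-1}$.
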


\begin{proof}
 If $U$ is an $n$-super finitely presented left $R$-module with  special super finitely presented module $K_{n-1}$, then there exists a short exact sequence $0\rightarrow K_{n}\rightarrow F_{n}\rightarrow K_{n-1}\rightarrow 0$ of left $R$-modules, where  $F_{n}$ is finitely generated and projective and $K_{n}$ is super finitely presented. Thus we have the following commutative diagram:

$$\xymatrix{
{\rm Hom}_{R}(F_{n}, \bigoplus_{i\in I}M_{i})\ar[r]\ar[d]^{\cong}&{\rm Hom}_{R}(K_{n}, \bigoplus_{i\in I} M_{i})\ar[r]\ar[d]^{\cong}&{\rm Ext}_{R}^{1}(K_{n-1}, \bigoplus_{i\in I} M_{i}) \ar[r]\ar[d]&0  \\
\bigoplus_{i\in I}{\rm Hom}_{R}(F_{n}, M_{i})\ar[r]&\bigoplus_{i\in I}{\rm Hom}_{R}(K_{n}, M_{i})\ar[r]&\bigoplus_{i\in I}{\rm Ext}_{R}^{1}(K_{n-1},  M_{i})\ar[r]& 0 \\
}$$
So ${\rm Ext}_{R}^{1}(K_{n-1}, \bigoplus_{i\in I}M_{i})\cong\bigoplus_{i\in I}{\rm Ext}_{R}^{1}(K_{n-1},M_{i})$ and hence by Remark \ref{1.3}(1), $\bigoplus_{i\in I}M_{i}$ is $n$-weak injective if and only if every $M_{i}$ is $n$-weak injective. 
\end{proof}

\begin{prop}\label{1.6} 
Let $n$ be a non-negative integer. Then, the following assertions hold:
\begin{enumerate}
\item[\rm (1)]  A left $R$-module $M$  is $n$-weak injective if and only if every pure submodule and pure
epimorphic image of $M$ is $n$-weak injective.

\item [\rm (2)] A right $R$-module $M$  is $n$-weak flat if and only if every pure submodule and pure
epimorphic image of $M$ is $n$-weak flat.
\end{enumerate}
\end{prop}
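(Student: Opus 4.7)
The ``if'' direction is immediate since $M$ is simultaneously a pure submodule (via the identity) and a pure epimorphic image of itself. For the ``only if'' direction, my plan is to pass through the character module and reduce the statement to the duality of Proposition \ref{1.4} combined with the behaviour of these classes under direct summands provided by Propositions \ref{1.5} and \ref{1.5a}.

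Let $0 \to A \to M \to M/A \to 0$ be a pure exact sequence of $R$-modules, so that $A$ is an arbitrary pure submodule and $M/A$ an arbitrary pure epimorphic image of $M$. The key classical fact I would invoke is that a short exact sequence is pure if and only if its image under $(-)^{*}=\Hom_{\mathbb{Z}}(-,\mathbb{Q}/\mathbb{Z})$ is split exact; consequently
\[
M^{*}\cong A^{*}\oplus (M/A)^{*}.
\]

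For part (1), assume $M$ is $n$-weak injective. Then $M^{*}$ is $n$-weak flat by Proposition \ref{1.4}(2). Since the class of $n$-weak flat right modules is closed under direct summands (Proposition \ref{1.5}(2) applied to a two-element index set), both $A^{*}$ and $(M/A)^{*}$ are $n$-weak flat. Applying Proposition \ref{1.4}(2) in the converse direction to $A$ and to $M/A$ then yields that both are $n$-weak injective.

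For part (2), the argument is entirely parallel with the roles of injective and flat interchanged: $M$ being $n$-weak flat gives $M^{*}$ $n$-weak injective by Proposition \ref{1.4}(1); the splitting decomposes $M^{*}$ as $A^{*}\oplus (M/A)^{*}$; Proposition \ref{1.5a} (or equivalently Proposition \ref{1.5}(1) for finite products) shows that $A^{*}$ and $(M/A)^{*}$ are both $n$-weak injective; and Proposition \ref{1.4}(1) finally delivers that $A$ and $M/A$ are $n$-weak flat. The only nontrivial ingredient is the splitting of the dualized pure sequence, which is a standard characterization of purity that I would simply cite; everything else is formal assembly of the results already established in this section.
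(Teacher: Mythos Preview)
Your proof is correct and follows essentially the same route as the paper: dualize the pure exact sequence to obtain a split sequence, transfer along Proposition~\ref{1.4}, and use closure under direct summands from Proposition~\ref{1.5} (and \ref{1.5a}). The only cosmetic difference is that you make the trivial ``if'' direction explicit, whereas the paper leaves it implicit.
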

\begin{proof}
(1) Let $M$ be an $n$-weak injective left $R$-module and $N$  a pure submodule of $M$.
Then there exists a pure exact sequence $0\rightarrow N\rightarrow M\rightarrow{M}/{N}\rightarrow 0$ which
gives rise to a split exact sequence $0\rightarrow({M}/{N})^{*}\rightarrow M^*\rightarrow N^*\rightarrow 0$ of right $R$-modules. By
Proposition \ref{1.4}(2), $M^*$
is $n$-weak flat. Now by Proposition \ref{1.5}(2), $M^*$ is $n$-weak flat if and only if $N^*$
and $({M}/{N})^*$  are
 $n$-weak flat. Hence by Proposition \ref{1.4}(2), we deduce that $M$ is $n$-weak injective if and only if $N$ and ${M}/{N}$ are $n$-weak injective. Similarly, we prove (2) using Propositions \ref{1.4}(1) and \ref{1.5}(1).
\end{proof}

Now, from the previous results in this section, the following results are obtained.
\begin{thm}\label{1.11}
Any direct product of $n$-weak flat right $R$-modules is $n$-weak flat.
\end{thm}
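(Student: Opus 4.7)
The plan is to reduce the statement to a Tor-vanishing calculation and then exploit the fact that, since a special super finitely presented module admits a resolution by \emph{finitely generated} projectives, tensor products commute with direct products at each stage of the resolution.

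Concretely, let $\{M_i\}_{i\in I}$ be a family of $n$-weak flat right $R$-modules and set $N:=\prod_{i\in I}M_i$. By Definition \ref{1.2}, it suffices to show that ${\rm Tor}_{n+1}^R(N,U)=0$ for every $n$-super finitely presented left $R$-module $U$. Using Remark \ref{1.3}(1), I would first replace this with the equivalent statement ${\rm Tor}_{1}^R(N,K_{n-1})=0$, where $K_{n-1}$ is an associated special super finitely presented left $R$-module. Because $K_{n-1}$ is itself super finitely presented, it admits a resolution $\cdots\rightarrow F_1\rightarrow F_0\rightarrow K_{n-1}\rightarrow 0$ in which every $F_j$ is finitely generated and projective.

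Next, I would compute ${\rm Tor}_{1}^R(N,K_{n-1})$ as $H_1(N\otimes_R F_\bullet)$. The key observation is that for any finitely presented (in particular, finitely generated projective) module $F$, the canonical map $(\prod_{i\in I}M_i)\otimes_R F\rightarrow \prod_{i\in I}(M_i\otimes_R F)$ is an isomorphism. Applying this termwise yields an isomorphism of complexes $N\otimes_R F_\bullet \cong \prod_{i\in I}(M_i\otimes_R F_\bullet)$. Since direct products are exact in the category of abelian groups, homology commutes with them, giving
$$ {\rm Tor}_{1}^R(N,K_{n-1}) \cong \prod_{i\in I}{\rm Tor}_{1}^R(M_i,K_{n-1}). $$
Appealing again to Remark \ref{1.3}(1), each factor is isomorphic to ${\rm Tor}_{n+1}^R(M_i,U)$, which vanishes since each $M_i$ is $n$-weak flat. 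Hence the product is zero and $N$ is $n$-weak flat.

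The only nontrivial ingredient is the commutation of tensor with products against finitely presented modules, which is standard (this is exactly the point where super finite presentation of $K_{n-1}$, rather than mere finite presentation, is used to give a termwise finitely generated projective resolution). Everything else is formal: the reduction via Remark \ref{1.3}(1), the exactness of products in $R$-$\Mod$, and the hypothesis on the $M_i$. An alternative route would be to pass through the character module using Proposition \ref{1.4}(1), but that would require understanding $(\prod M_i)^*$, which is less tractable than the direct Tor computation above.
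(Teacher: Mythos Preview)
Your proof is correct and takes a genuinely different route from the paper. The paper proceeds by duality: it first observes that each $M_i$ is pure in $M_i^{**}$, so $\prod_i M_i$ is pure in $\prod_i M_i^{**}\cong(\bigoplus_i M_i^{*})^{*}$; then it applies Proposition~\ref{1.4}(1) to see that each $M_i^{*}$ is $n$-weak injective, Proposition~\ref{1.5a} to conclude the direct sum is $n$-weak injective, and Proposition~\ref{1.4}(2) to get that $(\bigoplus_i M_i^{*})^{*}$ is $n$-weak flat; finally Proposition~\ref{1.6}(2) passes $n$-weak flatness to the pure submodule $\prod_i M_i$. Your argument instead works directly at the level of Tor: since $K_{n-1}$ admits a resolution by \emph{finitely generated} projectives, tensoring commutes with the product termwise, and exactness of products in abelian groups gives ${\rm Tor}_1^R(\prod_i M_i,K_{n-1})\cong\prod_i{\rm Tor}_1^R(M_i,K_{n-1})=0$. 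Your approach is more self-contained and isolates exactly where super finite presentation is used (commuting $-\otimes_R F_j$ with products), while the paper's approach is less direct but illustrates the utility of the duality Propositions~\ref{1.4}--\ref{1.6} already established. Incidentally, the paper's Proposition~\ref{1.5a} rests on the same underlying finiteness idea (commuting $\Hom_R(K_n,-)$ with direct sums), so the two proofs are dual in spirit.
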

\begin{proof}
Let $\{M_{i}\}_{i\in I}$ be a family of $n$-weak flat right $R$-modules. By \cite[Proposition 2.4.5]{JX}, $M_{i}$ is pure in $M_{i}^{**}$, hence $\prod_{i\in I}M_{i}$ is pure in $\prod_{i\in I}M_{i}^{**}$. Thus, using Corollary \ref{1.6}(2), it suffices to prove that $\prod_{i\in I}M_{i}^{**}$  is $n$-weak flat. Using \cite[Theorem 2.4]{Rot1}, we have $\prod_{i\in I}M_{i}^{**} \cong  (\bigoplus_{i\in I} M_{i}^{*})^{*}$. 
By Proposition \ref{1.4}(1), each $M_{i}^{*}$ is $n$-weak injective, and so $\bigoplus_{i\in I} M_{i}^{*}$ is $n$-weak injective by Proposition \ref{1.5a}. Thus $(\bigoplus_{i\in I} M_{i}^{*})^{*}$ is $n$-weak flat by Proposition \ref{1.4}(2), and so is $\prod_{i\in I}M_{i}^{**}$.
\end{proof}
\begin{prop}\label{1.7}
 Let $n$ be a non-negative integer. Then,
\begin{enumerate}
\item [\rm (1)] 
If a left $R$-module $M$ is $n$-weak injective, then ${\rm Ext}_{R}^{i}(K_{n-1}, M)=0$ for every  special super finitely presented left $R$-module $K_{n-1}$  and $i\geq 1$.
\item [\rm (2)] 
If a right $R$-module $M$  is $n$-weak flat, then ${\rm Tor}_{i}^{R}(M, K_{n-1})=0$ for every  special super finitely presented left $R$-module $K_{n-1}$  and $i\geq 1$.
\end{enumerate}
\end{prop}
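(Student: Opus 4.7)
The plan is to prove both assertions by iterated dimension shifting along the projective resolution exhibiting $K_{n-1}$ as special super finitely presented; I treat (1) in detail, and (2) will follow by the obvious $\Tor$-analogue. Fix an $n$-super finitely presented module $U$ with resolution $\cdots\to F_{n+1}\to F_n\to\cdots\to F_0\to U\to 0$ and syzygies $K_i:=\Im(F_{i+1}\to F_i)$, so that the given $K_{n-1}=\Im(F_n\to F_{n-1})$. The case $i=1$ is immediate: by Remark \ref{1.3}(1), $\Ext_R^{n+1}(U,M)\cong\Ext_R^1(K_{n-1},M)$, and the left side vanishes by $n$-weak injectivity of $M$.

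For $i\ge 2$, the short exact sequences $0\to K_m\to F_m\to K_{m-1}\to 0$ with $m\ge n$, together with the projectivity of each $F_m$, give dimension-shift isomorphisms $\Ext_R^i(K_{n-1},M)\cong\Ext_R^{i-1}(K_n,M)\cong\cdots\cong\Ext_R^1(K_{n+i-2},M)$. So it is enough to prove that $\Ext_R^1(K_m,M)=0$ for every $m\ge n$, and for this I will exhibit each such $K_m$ as the special super finitely presented module of some $n$-super finitely presented module, so that $n$-weak injectivity of $M$ applies directly.

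This realization of the higher syzygies is the only nontrivial step. Given $m\ge n$, shift the resolution of $U$ upward by $m-n+1$ positions: set $G_i:=F_{i+(m-n+1)}$ for $i\ge 0$ and consider the truncated tail $\cdots\to G_{n+1}\to G_n\to\cdots\to G_0\to K_{m-n}\to 0$, which is exact by the exactness of the original resolution. Every $G_i$ is projective; for $i\ge n$ the index $i+m-n+1\ge m+1\ge n+1$ shows that $G_i$ is finitely generated and projective. Hence $K_{m-n}$ is $n$-super finitely presented, and its associated special super finitely presented module is $\Im(G_n\to G_{n-1})=\Im(F_{m+1}\to F_m)=K_m$, as required.

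For part (2), the same short exact sequences produce the parallel isomorphisms $\Tor_i^R(M,K_{n-1})\cong\Tor_{i-1}^R(M,K_n)\cong\cdots\cong\Tor_1^R(M,K_{n+i-2})$, and each of these vanishes by $n$-weak flatness of $M$ applied to the special super finitely presented modules constructed in the previous paragraph. The main obstacle throughout is that single shift construction; once every higher syzygy $K_m$ ($m\ge n$) is recognized as special super finitely presented, both parts collapse to routine dimension shifts.
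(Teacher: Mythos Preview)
Your argument is correct. Both your proof and the paper's reduce to showing $\Ext_R^1(K_m,M)=0$ for all $m\ge n-1$ and then apply dimension shifting along the short exact sequences $0\to K_m\to F_m\to K_{m-1}\to 0$. The difference lies in how this vanishing is obtained. The paper keeps the module $U$ fixed and increases the level: by Remark~\ref{1.3}(3) the same $U$ is $(n+j)$-super finitely presented with special module $K_{n+j-1}$, and by Remark~\ref{1.3}(4) an $n$-weak injective module is automatically $(n+j)$-weak injective, whence $\Ext_R^1(K_{n+j-1},M)=0$. You instead keep the level $n$ fixed and change the base module: you observe that the syzygy $K_{m-n}$ is itself $n$-super finitely presented (via the shifted resolution $G_i=F_{i+m-n+1}$) with special module $K_m$, so $n$-weak injectivity of $M$ applies directly. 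Your route is slightly more self-contained in that it does not invoke Remark~\ref{1.3}(4), while the paper's route avoids having to verify the index bookkeeping in the shifted resolution. Both are short and natural; the paper's is marginally cleaner because the needed closure property is already on record.
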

\begin{proof}
(1) Let $U$ be an $n$-super finitely presented left $R$-module with  special super finitely presented module $K_{n-1}$. Then by  Remark \ref{1.3}(3),  $U$ is $(n+1)$-super presented with special super finitely presented module $K_{n}$.  Also, by Remark \ref{1.3}(4), every $n$-weak injective left $R$-module is $(n+1)$-weak injective, and hence 
${\rm Ext}_{R}^{n+2}(U, M)\cong{\rm Ext}_{R}^{1}(K_{n}, M)=0$. There exists an exact sequence $0\rightarrow K_{n}\rightarrow F_{n}\rightarrow K_{n-1}\rightarrow 0$ of left $R$-modules, where $F_n$ is finitely generated and projective. Thus, it follows that ${\rm Ext}_{R}^{1}(K_{n}, M)\cong {\rm Ext}_{R}^{2}(K_{n-1}, M)$ and so ${\rm Ext}_{R}^{2}(K_{n-1}, M)=0$. Repeating this process, we conclude that ${\rm Ext}_{R}^{i}(K_{n-1}, M)=0$ for every $i\geq 1$.

(2) Assume that $U$ is an $n$-super finitely presented left $R$-module with  special super finitely presented module  $K_{n-1}$. By Remark \ref{1.3}(4), every $n$-weak flat right $R$-module  is $(n+1)$-weak flat, and hence ${\rm Tor}_{n+2}^{R}(M,U)\cong{\rm Tor}_{1}^{R}(M, K_{n})=0$. There exists an exact sequence $0\rightarrow K_{n}\rightarrow F_{n}\rightarrow K_{n-1}\rightarrow 0$ of left $R$-modules,  where $F_n$ is finitely generated and projective. We deduce that ${\rm Tor}_{1}^{R}(M, K_{n})\cong {\rm Tor}_{2}^{R}(M, K_{n-1})$, then ${\rm Tor}_{2}^{R}(M, K_{n-1})=0$, and so  ${\rm Tor}_{i}^{R}(M, K_{n-1})=0$ for any $i\geq 1$.
\end{proof}
\begin{cor}\label{1.7a}{\rm (\cite[Propositin 3.1]{Z.W})}
\begin{enumerate}
\item [\rm (1)] 
If a left $R$-module $M$  is weak injective, then ${\rm Ext}_{R}^{i}(F, M)=0$ for every super finitely presented left $R$-module $F$  and $i\geq 1$.
\item [\rm (2)] 
If a right $R$-module $M$  is weak flat, then ${\rm Tor}_{i}^{R}(M, F)=0$ for every  super finitely presented left $R$-module $F$  and $i\geq 1$.
\end{enumerate}
\end{cor}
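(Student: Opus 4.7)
The plan is to obtain Corollary \ref{1.7a} as the specialization of Proposition \ref{1.7} to the case $n = 0$. By Remark \ref{1.3}(1), a $0$-super finitely presented left $R$-module is precisely a super finitely presented module, and a $0$-weak injective (resp.\ $0$-weak flat) module is precisely a weak injective (resp.\ weak flat) module. For $n = 0$, the symbol $K_{n-1} = K_{-1}$ in Definition \ref{1.1} stands for ${\rm Im}(F_0 \to U)$, which coincides with $U$ itself because the map $F_0 \to U$ is surjective. Hence the ``special super finitely presented module'' attached to a super finitely presented module $F$ in the sense of Definition \ref{1.1} is $F$ itself.

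With these identifications, Proposition \ref{1.7}(1) applied with $n = 0$, $U = F$, and $K_{-1} = F$ yields ${\rm Ext}_{R}^{i}(F, M) = 0$ for every $i \geq 1$ whenever $M$ is weak injective, which is statement (1). Analogously, Proposition \ref{1.7}(2) applied with $n = 0$ gives statement (2) for weak flat modules.

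If one prefers a self-contained argument that does not invoke Proposition \ref{1.7}, a direct dimension-shifting proof is available. Take a resolution $\cdots \to F_1 \to F_0 \to F \to 0$ of $F$ by finitely generated free $R$-modules, and set $K_j := {\rm Im}(F_{j+1} \to F_j)$. Truncating the resolution from the left shows that each syzygy $K_j$ is itself super finitely presented. Since each $F_j$ is projective, one obtains isomorphisms
\[
{\rm Ext}_{R}^{i+1}(F, M) \cong {\rm Ext}_{R}^{i}(K_0, M) \cong \cdots \cong {\rm Ext}_{R}^{1}(K_{i-1}, M),
\]
and the last term vanishes because $K_{i-1}$ is super finitely presented and $M$ is weak injective. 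The same scheme with ${\rm Tor}$ in place of ${\rm Ext}$ handles the flat case.

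There is essentially no obstacle here: the only fact that must be checked is that syzygies of a super finitely presented module are again super finitely presented, and this is immediate from Definition \ref{1.1} by truncating the infinite resolution by finitely generated free (equivalently, finitely generated projective) modules. Everything else is standard dimension shifting using the projectivity of the terms in the resolution together with the defining vanishing of ${\rm Ext}^{1}$ (resp.\ ${\rm Tor}_{1}$) against super finitely presented modules for weak injective (resp.\ weak flat) modules.
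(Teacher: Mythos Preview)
Your proposal is correct and follows exactly the route the paper intends: the corollary appears immediately after Proposition~\ref{1.7} with no separate proof, so it is meant to be read as the case $n=0$ of that proposition, using Remark~\ref{1.3}(1) to identify $0$-weak injective/flat with weak injective/flat and $K_{-1}$ with the super finitely presented module $U$ itself. Your added self-contained dimension-shifting argument is also fine and simply unwinds the same inductive mechanism that drives the proof of Proposition~\ref{1.7}.
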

\section{Special super finitely presented dimension of modules and rings}
\ \ \ In this section, we give some homological aspects of modules with finite $n$-weak
injective and $n$-weak flat dimensions. Let  $n$, $k$  be  non-negative integers. We denote by $\mathcal{WI}_{k}^n(R)$ and $\mathcal{WF}_k^n(R^{op})$ the classes of left and right modules with 
$n$-weak injective dimension and $n$-weak flat dimensions  at most $k$, respectively. If $k=0$, then $\mathcal{WI}_{0}^n(R)=\mathcal{WI}^n(R)$ and $\mathcal{WF}_0^n(R^{op})=\mathcal{WF}^n(R^{op})$, see Remark \ref{1.3}(5). 
\begin{Def}\label{2.1}
The $n$-weak injective dimension of a left module $M$ and $n$-weak flat dimension of a right module $N$
are defined by

$n$-${\rm wid}_{R}(M)$= ${\rm inf}\{k: \ {\rm Ext}_{R}^{k+1}(K_{n-1}, M)=0 \ for \ every \ special \ super \ finitely\ presented\  left\\ module\  K_{n-1}\}$,

and

$n$-${\rm wfd}_{R}(N)$= $ {\rm inf}\{k: \ {\rm Tor}^{R}_{k+1}(N, K_{n-1})=0 \ for \ every \ special \ super \ finitely\ presented\ left\\ module \  K_{n-1}\}$.
\end{Def}
If $k=0$, then by Remark \ref{1.3}(1), it follows that $M$ and $N$ are $n$-weak injective and $n$-weak flat, respectively.

\begin{prop}\label{2.2} 
Let $M$ be a left $R$-module. Then,  the following assertions are equivalent:
\begin{enumerate}
\item [\rm (1)]
$n$-${\rm wid}_{R}(M)\leq k$.
\item [\rm (2)]
${\rm Ext}_{R}^{k+i}(K_{n-1}, M)=0$ for any special super finitely presented left $R$-module $K_{n-1}$  and any $i\geq 1$.
\item [\rm (3)]
${\rm Ext}_{R}^{k+1}(K_{n-1}, M)=0$ for any special super finitely presented left $R$-module $K_{n-1}$.
\item [\rm (4)]
If a sequence of the form $0\rightarrow M\rightarrow E_0 \rightarrow E_1 \rightarrow\cdots\rightarrow E_k\rightarrow0$ is exact with $E_0, E_1,\cdots E_{k-1}$ are
$n$-weak injective modules, then $E_k$ is also $n$-weak injective.
\item [\rm (5)]
There exists an exact sequence  $0\rightarrow M\rightarrow E_0 \rightarrow E_1 \rightarrow\cdots\rightarrow E_k\rightarrow0$, where each $E_i$ is $n$-weak injective.
\end{enumerate}
\end{prop}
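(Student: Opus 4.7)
The plan is to prove the equivalences via the cycle $(1) \Leftrightarrow (3) \Rightarrow (2) \Rightarrow (4) \Rightarrow (5) \Rightarrow (3)$, together with the trivial direction $(2) \Rightarrow (3)$ by taking $i=1$. The equivalence $(1)\Leftrightarrow(3)$ is immediate from Definition \ref{2.1}, so the real content lies in the remaining implications. Throughout, the main tool is dimension shifting in $\Ext$, and the key structural observation is that if $K_{n-1}$ is special super finitely presented with resolution $\cdots\to F_{n+1}\to F_{n}\to K_{n-1}\to 0$ (projectives finitely generated from degree $n$ onward), then $K_n := \Im(F_{n+1}\to F_n)$ is again special super finitely presented, being the special syzygy of the shifted resolution.

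For $(3)\Rightarrow(2)$, I would proceed by induction on $i$. The base $i=1$ is the hypothesis. For the step, given a special super finitely presented $K_{n-1}$, the short exact sequence $0\to K_n\to F_n\to K_{n-1}\to 0$ with $F_n$ finitely generated projective yields the dimension-shifting isomorphism $\Ext^{k+i+1}_R(K_{n-1},M)\cong \Ext^{k+i}_R(K_n,M)$. Since $K_n$ is itself special super finitely presented (by the observation above), applying (3) to $K_n$ — or rather the inductive hypothesis — gives the vanishing. This is essentially the same argument as in Proposition \ref{1.7}, adapted to dimension $k$.

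For $(2)\Rightarrow(4)$, given the exact sequence $0\to M\to E_0\to\cdots\to E_k\to 0$, I would split it into short exact sequences $0\to C_{i-1}\to E_i\to C_i\to 0$ with $C_{-1}=M$, $C_{k-1}=E_k$, and $C_i = \Coker(E_{i-1}\to E_i)$ for $0\leq i\leq k-2$. Proposition \ref{1.7}(1) applied to the $n$-weak injective modules $E_0,\ldots,E_{k-1}$ makes all of them $\Ext^{\geq 1}_R(K_{n-1},-)$-acyclic for any special super finitely presented $K_{n-1}$. Iterating dimension shifting along the short exact sequences yields
\[
\Ext^{1}_R(K_{n-1},E_k) \cong \Ext^{2}_R(K_{n-1},C_{k-2}) \cong \cdots \cong \Ext^{k+1}_R(K_{n-1},M),
\]
which vanishes by (2). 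Hence $E_k$ is $n$-weak injective.

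For $(4)\Rightarrow(5)$, I would start from an injective resolution $0\to M\to I^0\to I^1\to\cdots$ and truncate: let $N:=\Im(I^{k-1}\to I^k)$, giving the exact sequence $0\to M\to I^0\to\cdots\to I^{k-1}\to N\to 0$. Each $I^j$ is injective hence $n$-weak injective (Remark \ref{1.3}(5)), so (4) forces $N$ to be $n$-weak injective, and we take $E_i:=I^i$ for $i<k$ and $E_k:=N$. Finally, $(5)\Rightarrow(3)$ runs the same dimension-shifting computation as in $(2)\Rightarrow(4)$ in reverse: the short exact sequences split off from the given resolution, combined with the vanishing $\Ext^{\geq 1}_R(K_{n-1},E_i)=0$ from Proposition \ref{1.7}(1), yield $\Ext^{k+1}_R(K_{n-1},M)\cong \Ext^1_R(K_{n-1},E_k)=0$. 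The only real subtlety is the observation that the syzygy $K_n$ of a special super finitely presented module is again special super finitely presented; once this is in hand, all steps reduce to routine dimension shifting.
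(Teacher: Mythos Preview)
Your proof is correct and follows essentially the same approach as the paper: both hinge on the observation that the syzygy $K_n$ of a special super finitely presented module is again special super finitely presented, and then use dimension shifting along $0\to K_n\to F_n\to K_{n-1}\to 0$ together with Proposition~\ref{1.7}(1). Your single induction on $i$ for $(3)\Rightarrow(2)$ is slightly cleaner than the paper's double induction on $k$ and $i$ for $(1)\Rightarrow(2)$, and you spell out $(4)\Rightarrow(5)$ via truncating an injective resolution where the paper merely declares it obvious, but these are cosmetic differences.
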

\begin{proof}
$(2)\Rightarrow  (3)\Rightarrow (1)$ and $(4)\Rightarrow  (5)$ are obvious.

$(1)\Rightarrow  (2)$ 
We proceed by induction on $k$. If $k=0$, then the result follows by Proposition \ref{1.7}(1). Suppose now the following assertion: if
$n$-${\rm wid}_{R}(M)\leq k$,
then
${\rm Ext}_{R}^{k+i}(K_{n-1}, M)=0$ for any special super finitely presented left $R$-module $K_{n-1}$ and any $i\geq 1$. Let us prove the following assertion: if 
$n$-${\rm wid}_{R}(M)\leq k+1$, then
${\rm Ext}_{R}^{k+1+i}(K_{n-1}, M)=0$ for any special super finitely presented left $R$-module $K_{n-1}$ and any $i\geq 1$. Suppose that $n$-${\rm wid}_{R}(M)\leq k+1$.
If  $n$-${\rm wid}_{R}(M)\leq k$.
then by induction hypothesis we have
${\rm Ext}_{R}^{k+i}(K_{n-1}, M)=0$ for any special super finitely presented left $R$-module $K_{n-1}$ and any $i\geq 1$, and so ${\rm Ext}_{R}^{k+1+i}(K_{n-1}, M)=0$ for any special super finitely presented left $R$-module $K_{n-1}$ and any $i\geq 1$. 
Now, if $n$-${\rm wid}_{R}(M)= k+1$, then ${\rm Ext}_{R}^{k+2}(K_{n-1}, M)=0$ for any special super finitely presented left $R$-module. By induction on $i$, we prove that ${\rm Ext}_{R}^{k+1+i}(K_{n-1}, M)=0$ for any special super finitely presented left $R$-module $K_{n-1}$ and $i\geq1$. The assertion is clearly true for $i=1$.  Suppose now that ${\rm Ext}_{R}^{k+1+i}(K_{n-1}, M)=0$ for any special super finitely presented left $R$-module $K_{n-1}$ and let us prove that ${\rm Ext}_{R}^{k+2+i}(K_{n-1}, M)=0$ for any special super finitely presented left $R$-module $K_{n-1}$.  Let $U$ be an $n$-super presented left $R$-module with special super finitely presented module $K_{n-1}$.
Then, there exists a short exact sequence $0\to K_n\to F_n\to K_{n-1}\to 0$, and we have ${\rm Ext}_{R}^{k+i+1}(K_{n}, M)\cong{\rm Ext}_{R}^{k+i+2}(K_{n-1}, M)$. But $K_0$ is $n$-super finitely presented with special super finitely presented module $K_n$,  hence by induction hypothesis ${\rm Ext}_{R}^{k+i+1}(K_{n}, M)=0$ and consequently ${\rm Ext}_{R}^{k+i+2}(K_{n-1}, M)=0$, as desired.

$(2)\Rightarrow (4)$ Consider the short exact sequence $0\to M\to E_0\to E_0/M\to 0$. Then for any special super finitely presented left $R$-module $K_{n-1}$, we have $$\cdots\to {\rm Ext}_{R}^{k+1}(K_{n-1}, E_0) \to {\rm Ext}_{R}^{k+1}(K_{n-1}, E_0/M)\to {\rm Ext}_{R}^{k+2}(K_{n-1}, M)\to \cdots$$
 But by assumption ${\rm Ext}_{R}^{k+2}(K_{n-1}, M)=0$ and by Proposition \ref{1.7}(1) ${\rm Ext}_{R}^{k+1}(K_{n-1}, E_0)$, so ${\rm Ext}_{R}^{k+1}(K_{n-1}, E_0/M)=0$. Step by step, we prove that ${\rm Ext}_{R}^{1}(K_{n-1}, E_k)=0$

$(5)\Rightarrow  (1)$  Follows from Proposition \ref{1.7}(1).
\end{proof}
\begin{prop}\label{2.3} 
Let $M$ be a right $R$-module. Then,  the following assertions are equivalent:
\begin{enumerate}
\item [\rm (1)]
$n$-${\rm wfd}_{R}(M)\leq k$.
\item [\rm (2)]
${\rm Tor}_{k+i}^{R}(M, K_{n-1})=0$ for any special super finitely presented  left $R$-module $K_{n-1}$  and any $i\geq 1$.
\item [\rm (3)]
${\rm Tor}_{k+1}^{R}(M, K_{n-1})=0$ for any special super finitely presented  left $R$-module $K_{n-1}$.
\item [\rm (4)]
If a sequence of the form $0\rightarrow F_k\rightarrow F_{k-1} \rightarrow\cdots\rightarrow F_1\rightarrow F_0\rightarrow0$ is exact with $F_0, F_1,\cdots F_{k-1}$ are
$n$-weak flat modules, then $F_k$ is also $n$-weak flat.
\item [\rm (5)]
There exists an exact sequence  $0\rightarrow F_k\rightarrow F_{k-1} \rightarrow\cdots\rightarrow F_1\rightarrow F_0\rightarrow0$, where each $F_i$ is $n$-weak flat.
\end{enumerate}
\end{prop}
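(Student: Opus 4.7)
The plan is to run the proof in exact parallel with that of Proposition \ref{2.2}, replacing the bifunctor $\Ext_R^\bullet(K_{n-1},-)$ with $\Tor_\bullet^R(M,K_{n-1})$, and swapping Proposition \ref{1.7}(1) for Proposition \ref{1.7}(2). The essential input throughout is the short exact sequence $0\to K_n\to F_n\to K_{n-1}\to 0$ guaranteed by Definition \ref{1.1} (with $F_n$ finitely generated projective), which yields the dimension-shifting isomorphism $\Tor_j^R(M,K_{n-1})\cong\Tor_{j-1}^R(M,K_n)$ for $j\geq 2$. I will also repeatedly use the observation already exploited in Proposition \ref{2.2}: if $U$ is $n$-super finitely presented with special super finitely presented module $K_{n-1}$, then the first syzygy $K_0$ is again $n$-super finitely presented and its special super finitely presented module is precisely $K_n$.

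First I would settle $(2)\Leftrightarrow(3)\Leftrightarrow(1)$. The implications $(2)\Rightarrow(3)\Rightarrow(1)$ are immediate from Definition \ref{2.1}, and $(1)\Rightarrow(2)$ is proved by induction on $k$. The base case $k=0$ is exactly Proposition \ref{1.7}(2). For the inductive step, assuming the statement for $k$ and supposing $n\text{-}\mathrm{wfd}_R(M)\leq k+1$, the sub-case $n\text{-}\mathrm{wfd}_R(M)\leq k$ follows from the induction hypothesis, while in the sub-case $n\text{-}\mathrm{wfd}_R(M)=k+1$ one does a secondary induction on $i$, using the isomorphism $\Tor_{k+i+1}^R(M,K_{n-1})\cong \Tor_{k+i}^R(M,K_n)$ obtained from the short exact sequence above and applying the inductive hypothesis to $K_0$, whose special super finitely presented module is $K_n$.

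For the second half I read the sequence in (4) and (5) as $0\to F_k\to F_{k-1}\to\cdots\to F_0\to M\to 0$ (the statement as written is missing the final $M$). For $(2)\Rightarrow(4)$ I would break the resolution into short exact sequences $0\to M_{j+1}\to F_j\to M_j\to 0$ with $M_0=M$ and $M_k=F_k$, then walk along the induced long exact Tor sequence: by Proposition \ref{1.7}(2) each $\Tor_{\geq 1}^R(F_j,K_{n-1})$ vanishes for $j<k$, so dimension shifting gives $\Tor_1^R(F_k,K_{n-1})\cong\Tor_{k+1}^R(M,K_{n-1})=0$ for every special super finitely presented $K_{n-1}$, which by Remark \ref{1.3}(1) shows that $F_k$ is $n$-weak flat. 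The implication $(4)\Rightarrow(5)$ is trivial: truncate any flat (in particular $n$-weak flat) resolution of $M$ at stage $k$. Finally $(5)\Rightarrow(1)$ is the same dimension-shift run in reverse: Proposition \ref{1.7}(2) kills the Tor of the intermediate $F_i$'s, so $\Tor_{k+1}^R(M,K_{n-1})\cong \Tor_1^R(F_k,K_{n-1})=0$.

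The only delicate point, exactly as in Proposition \ref{2.2}, is the double induction in $(1)\Rightarrow(2)$; the combinatorics of the two nested inductions and the need to re-identify $K_n$ as the special super finitely presented module of $K_0$ is the bookkeeping step one must get right. Everything else is a routine translation of the Ext argument into its Tor dual using Propositions \ref{1.4}, \ref{1.5} and \ref{1.7} already established.
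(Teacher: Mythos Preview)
Your proposal is correct and takes exactly the same approach as the paper: the paper's own proof of Proposition \ref{2.3} consists of a single sentence stating that the argument is analogous to that of Proposition \ref{2.2}, using Proposition \ref{1.7}(2) in place of \ref{1.7}(1). Your observation that the exact sequences in (4) and (5) are missing the term $M\to 0$ at the end is also correct; this is a typo in the statement that you have rightly corrected.
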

\begin{proof}
The proof is similar to the proof of Proposition \ref{2.2}, using Proposition \ref{1.7}(2).
\end{proof}

 \begin{cor}\label{2.3a}
Let  $0\rightarrow A\rightarrow B\rightarrow C\rightarrow 0$ be a short exact sequence of $R$-modules. Then, the following assertions hold:
\begin{enumerate}
\item [\rm (1)] 
If $A$ and $ B$ are in $\mathcal{WI}_{ k}^{n}(R)$, then  $C$ is in $\mathcal{WI}_{ k}^{n}(R)$.
\item [\rm (2)]
If $B$ and $C$ are in $\mathcal{WF}_{ k}^{n}(R^{op})$, then $A$ is in $\mathcal{WF}_{ k}^{n}(R^{op})$.
\end{enumerate}
\end{cor}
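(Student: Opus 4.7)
The plan is to translate membership in $\mathcal{WI}_{k}^{n}(R)$ and $\mathcal{WF}_{k}^{n}(R^{op})$ into vanishing of Ext and Tor via Propositions \ref{2.2} and \ref{2.3}, and then read off the desired conclusion from the long exact sequence of $\Hom_R(K_{n-1},-)$ (respectively $-\otimes_R K_{n-1}$) applied to $0\to A\to B\to C\to 0$, where $K_{n-1}$ is an arbitrary special super finitely presented left $R$-module. The crucial point is that the characterizations in these propositions upgrade the single vanishing in degree $k+1$ to vanishing in all higher degrees, which is exactly what is needed to make the long exact sequence argument work across the dimension shift.

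For (1), fix a special super finitely presented left $R$-module $K_{n-1}$. The equivalence of (1) and (2) in Proposition \ref{2.2} applied to $A$ and to $B$ yields $\Ext_R^{k+i}(K_{n-1},A)=0$ and $\Ext_R^{k+i}(K_{n-1},B)=0$ for every $i\geq 1$. Applying $\Hom_R(K_{n-1},-)$ to the short exact sequence produces the segment
$$\Ext_R^{k+1}(K_{n-1},B)\longrightarrow \Ext_R^{k+1}(K_{n-1},C)\longrightarrow \Ext_R^{k+2}(K_{n-1},A),$$
whose outer terms are zero. Hence $\Ext_R^{k+1}(K_{n-1},C)=0$, and since $K_{n-1}$ was arbitrary, condition (3) of Proposition \ref{2.2} gives $C\in\mathcal{WI}_{k}^{n}(R)$.

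For (2), the argument is the Tor-dual. Fix a special super finitely presented left $R$-module $K_{n-1}$. Proposition \ref{2.3} applied to $B$ and $C$ yields $\Tor_{k+i}^R(B,K_{n-1})=0=\Tor_{k+i}^R(C,K_{n-1})$ for every $i\geq 1$. The long Tor-sequence obtained from $-\otimes_R K_{n-1}$ contains the segment
$$\Tor_{k+2}^R(C,K_{n-1})\longrightarrow \Tor_{k+1}^R(A,K_{n-1})\longrightarrow \Tor_{k+1}^R(B,K_{n-1}),$$
whose outer terms vanish, forcing $\Tor_{k+1}^R(A,K_{n-1})=0$. Condition (3) of Proposition \ref{2.3} then gives $A\in\mathcal{WF}_{k}^{n}(R^{op})$.

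I do not expect any real obstacle: the whole argument is standard bookkeeping with long exact sequences, and the only substantive ingredient is the higher-degree vanishing supplied by Propositions \ref{2.2} and \ref{2.3}. The mild pitfall to watch is simply the index shift between the segment that controls $C$ (or $A$) and the segments that were assumed to vanish, which is precisely why one needs the full equivalence with condition (2) in those propositions rather than condition (3) alone.
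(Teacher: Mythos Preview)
Your proof is correct and follows essentially the same route as the paper: both arguments invoke Propositions \ref{2.2} and \ref{2.3} to obtain vanishing of $\Ext$ (respectively $\Tor$) in degrees $k+1$ and $k+2$, then read off the desired vanishing for $C$ (respectively $A$) from the appropriate segment of the long exact sequence. Your remark about needing condition (2) rather than merely condition (3) of those propositions is exactly the point the paper uses implicitly.
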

\begin{proof}
(1) Let $U$ be an $n$-super finitely presented left $R$-module with  special super finitely presented module $K_{n-1}$. If $A$ and $ B$ are in $\mathcal{WI}_{ k}^{n}(R)$, then by Proposition \ref{2.2}, the short exact sequence $0\rightarrow A\rightarrow B\rightarrow C\rightarrow 0$  induces the following exact sequence:
$$0={\rm Ext}_{R}^{k+1}(K_{n-1}, B)\rightarrow {\rm Ext}_{R}^{k+1}(K_{n-1}, C) \rightarrow {\rm Ext}_{R}^{k+2}(K_{n-1}, A)=0.$$
 Hence ${\rm Ext}_{R}^{k+1}(K_{n-1}, C)=0$, and so $C$ is in $\mathcal{WI}_{ k}^{n}(R)$.

(2) Assume that $0\rightarrow A\rightarrow B\rightarrow C\rightarrow 0$ is a short exact sequence of right $R$-modules. Then, by hypothesis and Proposition \ref{2.3}, we have:
$$0={\rm Tor}_{k+2}^{R}(K_{n-1}, C)\rightarrow {\rm Tor}_{k+1}^{R}(K_{n-1}, A) \rightarrow {\rm Tor}_{k+1}^{R}(K_{n-1}, B)=0,$$
where $K_{n-1}$ is a special super finitely presented module associated to an $n$-super presented left $R$-module $U$. Consequently, ${\rm Tor}_{k+1}^{R}(K_{n-1}, A)=0$ and so $A$ is in $\mathcal{WF}_{ k}^{n}(R^{op})$.
\end{proof}

The following proposition is a special case of \cite[Proposition 3.5]{Z.W}.
\begin{prop}\label{2.3l} 
Let $M$ be an $R$-module. Then,  the following assertions are equivalent:
\begin{enumerate}
\item [\rm (1)]
${\rm pd}_{R}(K_{n-1})\leq k$  for all special super finitely presented left $R$-modules $K_{n-1}$.
\item [\rm (2)]
${\rm fd}_{R}(K_{n-1})\leq k$  for all special super finitely presented left  $R$-modules $K_{n-1}$.
\item [\rm (3)]
${\rm Ext}_{R}^{k+1}(K_{n-1}, M)=0$ for any special super finitely presented  left $R$-module $K_{n-1}$  and any left $R$-module $M$.
\item [\rm (4)]
${\rm Tor}_{k+1}^{R}(M, K_{n-1})=0$ for any special super finitely presented  left $R$-module $K_{n-1}$  and any right $R$-module $M$.
\end{enumerate}
\end{prop}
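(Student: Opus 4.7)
The plan is to verify the cycle $(1) \Leftrightarrow (3) \Rightarrow (2) \Leftrightarrow (4) \Rightarrow (3)$. The equivalences $(1) \Leftrightarrow (3)$ and $(2) \Leftrightarrow (4)$ are the standard characterizations of projective and flat dimension respectively, while $(1) \Rightarrow (2)$ follows from the universal inequality $\fd \leq \pd$. The only substantive step is $(4) \Rightarrow (3)$, which I would obtain through a character module argument.

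The crucial structural input is that, by Definition \ref{1.1}, every special super finitely presented module $K_{n-1}$ fits into a resolution $\cdots \rightarrow F_{n+1} \rightarrow F_n \rightarrow K_{n-1} \rightarrow 0$ in which each $F_i$ is finitely generated and projective; in other words, $K_{n-1}$ admits a projective resolution $P_\bullet \rightarrow K_{n-1}$ by finitely generated projective modules (so $K_{n-1}$ is in fact super finitely presented in the classical sense). I would exploit this by invoking, for any left $R$-module $N$ and any finitely generated projective $P$, the natural isomorphism $N^{*} \otimes_R P \cong \Hom_R(P, N)^{*}$: it is tautological when $P = R^m$ (both sides equal $(N^{*})^m$) and extends to direct summands. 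Applying this termwise along $P_\bullet$ yields an isomorphism of complexes $N^{*} \otimes_R P_\bullet \cong \Hom_R(P_\bullet, N)^{*}$, and since $\Q/\Z$ is $\Z$-injective the functor $(-)^{*}$ is exact, so passing to homology delivers $\Tor_j^R(N^{*}, K_{n-1}) \cong \Ext_R^j(K_{n-1}, N)^{*}$ for every $j \geq 0$.

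With this identification in hand, $(4) \Rightarrow (3)$ is quick: specializing $M = N^{*}$ in $(4)$ gives $\Tor_{k+1}^R(N^{*}, K_{n-1}) = 0$, hence $\Ext_R^{k+1}(K_{n-1}, N)^{*} = 0$, and faithfulness of the character functor (a module vanishes if and only if its $\Z$-dual does) forces $\Ext_R^{k+1}(K_{n-1}, N) = 0$. As $N$ and $K_{n-1}$ were arbitrary, $(3)$ follows, which then closes the loop via the standard equivalence $(3) \Rightarrow (1)$. The principal obstacle, and the place where special super presentation is really used, is precisely the derived-level duality $\Tor_j^R(N^{*}, K_{n-1}) \cong \Ext_R^j(K_{n-1}, N)^{*}$: without a resolution of $K_{n-1}$ by \emph{finitely generated} projectives, the termwise iso $N^{*} \otimes_R P \cong \Hom_R(P, N)^{*}$ would be unavailable and the argument would collapse.
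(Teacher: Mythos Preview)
Your argument is correct: the key point, as you identify, is that every special super finitely presented module $K_{n-1}$ is in fact super finitely presented (it admits a resolution by finitely generated projectives), so the derived duality $\Tor_j^R(N^{*},K_{n-1})\cong\Ext_R^j(K_{n-1},N)^{*}$ is available and yields $(4)\Rightarrow(3)$; the remaining implications are standard. The paper itself does not spell out a proof but simply records the statement as a special case of \cite[Proposition~3.5]{Z.W}, whose argument is exactly the character-module method you carried out, so your approach coincides with what lies behind the cited reference.
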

\begin{prop}\label{2.4}
Let $M$ be an $R$-module. Then, the following assertions hold:
\begin{enumerate}
\item [\rm (1)] 
$M$ is in $\mathcal{WF}_{k}^{n}(R^{op})$  if and only  if  $M^{*}$ is in $\mathcal{WI}_{ k}^n(R)$.
\item [\rm (2)] 
 $M$ is in $\mathcal{WI}^n{ k}(R)$  if and only  if  $M^{*}$ is in $\mathcal{WF}_{ k}^{n}(R^{op})$.
\end{enumerate}
\end{prop}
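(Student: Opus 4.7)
The plan is to reduce both parts to the case $k=0$ handled in Proposition \ref{1.4}, by combining Propositions \ref{2.2} and \ref{2.3} (which recast membership in $\mathcal{WI}_k^n(R)$ and $\mathcal{WF}_k^n(R^{op})$ as vanishing of the $(k+1)$-th Ext/Tor with every special super finitely presented module $K_{n-1}$) with higher-degree analogues of the two natural isomorphisms already exploited in the proof of Proposition \ref{1.4}.

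First, I would establish, for every special super finitely presented left $R$-module $K_{n-1}$ and every $i\geq 1$, the natural isomorphisms
\[
\Tor_{i}^{R}(M,K_{n-1})^{*}\cong \Ext_{R}^{i}(K_{n-1},M^{*}),\qquad \Ext_{R}^{i}(K_{n-1},M)^{*}\cong \Tor_{i}^{R}(M^{*},K_{n-1}).
\]
The first one follows by dimension shifting from the hom–tensor adjunction $\Hom_{\mathbb{Z}}(M\otimes_{R}-,\mathbb{Q}/\mathbb{Z})\cong \Hom_{R}(-,M^{*})$ applied to a projective resolution of $K_{n-1}$, together with the exactness of $\Hom_{\mathbb{Z}}(-,\mathbb{Q}/\mathbb{Z})$. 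The second rests on $M^{*}\otimes_{R}K_{n-1}\cong \Hom_{R}(K_{n-1},M)^{*}$ from \cite[Lemma 3.55]{Rot2}, which needs $K_{n-1}$ to be finitely presented. This is in fact built into Definition \ref{1.1}: the tail $\cdots\rightarrow F_{n+1}\rightarrow F_{n}\rightarrow K_{n-1}\rightarrow 0$ is a resolution of $K_{n-1}$ by finitely generated projective modules, so $K_{n-1}$ is super finitely presented and the isomorphism propagates to all $i\geq 1$ by the same dimension-shifting argument.

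For (1), by Proposition \ref{2.3} we have $M\in\mathcal{WF}_{k}^{n}(R^{op})$ if and only if $\Tor_{k+1}^{R}(M,K_{n-1})=0$ for every special super finitely presented $K_{n-1}$. Since $\mathbb{Q}/\mathbb{Z}$ is an injective cogenerator of $\mathbb{Z}$-$\Mod$, an abelian group $N$ vanishes iff $N^{*}=0$. Applying this to the first isomorphism above, the condition becomes $\Ext_{R}^{k+1}(K_{n-1},M^{*})=0$ for all $K_{n-1}$, which by Proposition \ref{2.2} is exactly $M^{*}\in\mathcal{WI}_{k}^{n}(R)$. The argument for (2) is completely symmetric: $M\in\mathcal{WI}_{k}^{n}(R)$ iff $\Ext_{R}^{k+1}(K_{n-1},M)=0$ for all $K_{n-1}$, iff $\Ext_{R}^{k+1}(K_{n-1},M)^{*}=0$, iff (by the second isomorphism) $\Tor_{k+1}^{R}(M^{*},K_{n-1})=0$, iff $M^{*}\in\mathcal{WF}_{k}^{n}(R^{op})$.

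The only delicate point is verifying that both natural isomorphisms remain valid at the index $k+1$; the first poses no difficulty whatsoever, while the second does rely on $K_{n-1}$ admitting a resolution by finitely generated projective modules. That subtlety, however, is already absorbed in the very definition of a special super finitely presented module, so nothing beyond routine dimension shifting and the cogenerator property of $\mathbb{Q}/\mathbb{Z}$ is needed to conclude.
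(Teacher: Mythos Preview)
Your argument is correct. You reduce both parts to the vanishing criteria of Propositions \ref{2.2} and \ref{2.3} and then invoke the higher-degree duality isomorphisms
\[
\Tor_{i}^{R}(M,K_{n-1})^{*}\cong \Ext_{R}^{i}(K_{n-1},M^{*}),\qquad \Ext_{R}^{i}(K_{n-1},M)^{*}\cong \Tor_{i}^{R}(M^{*},K_{n-1}),
\]
noting that the second one is available precisely because $K_{n-1}$ is super finitely presented. This gives a direct, non-inductive proof.

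The paper proceeds differently: it argues by induction on $k$, dimension-shifting on the module $M$ rather than on $K_{n-1}$. For part (1), it takes a short exact sequence $0\to M\to E\to L\to 0$, uses that passing from $M$ to $L$ (or its dual) drops the relevant dimension parameter by one, and invokes the inductive hypothesis together with the base case $k=0$ from Proposition \ref{1.4}. Your route is cleaner and more transparent: it isolates exactly what is needed (a finitely generated projective resolution of $K_{n-1}$) and applies the standard Ext--Tor duality in one stroke, whereas the paper's inductive argument repackages the same dimension shift on the other variable. Both approaches ultimately rest on the $k=0$ isomorphisms of Proposition \ref{1.4}; yours extends them to higher degree once and for all, while the paper's climbs one step at a time.
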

\begin{proof}
(1) We proceed by induction on $k$. If $k=0$, then by Propositions \ref{1.4}(1) and \ref{2.3}, $M$ is $n$-weak flat if and only if $M^*$ is $n$-weak injective. Consider the short exact sequence $0\rightarrow M\rightarrow E\rightarrow L\rightarrow 0,$ where $E$ is injective. Then, $M$ is in $\mathcal{WF}_{ k}^{n}(R^{op})$ if and only if $L$ is in $\mathcal{WF}_{ k-1}^{n}(R^{op})$. We have $L$ is in $\mathcal{WF}_{ k-1}^{n}(R^{op})$ if and only if $L^*$ is in $\mathcal{WI}_{ k-1}^n(R)$ and so $M^*$ is in $\mathcal{WI}_{ k}^n(R)$. Similarly, by Propositions \ref{1.4}(2) and \ref{2.2}, (2) holds.
\end{proof}
\begin{prop}\label{2.6}
Let $M$ be an $R$-module. Then, the following assertions hold:
\begin{enumerate}
\item [\rm (1)] 
$M$ is in $\mathcal{WI}_{ k}^{n}(R)$ if and only if  for every pure submodule $N$ of $M$, $N$ and $ {M}/{N}$ are in $\mathcal{WI}_{ k}^{n}(R)$.
\item [\rm (2)] 
$M$ is in $\mathcal{WF}_{ k}^{n}(R^{op})$ if and only if  for every pure submodule $N$ of $M$, $N$ and $ {M}/{N}$ are in $\mathcal{WF}_{ k}^{n}(R^{op})$.
\end{enumerate}
\end{prop}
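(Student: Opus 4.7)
The plan is to reproduce the scheme of Proposition 1.6 at dimension $k$, replacing Propositions 1.4 and 1.5 by their $k$--analogues, namely Proposition 2.4 together with the additivity of $\Ext$ and $\Tor$ that is built into Propositions 2.2 and 2.3. The argument does not need induction on $k$.

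For (1), I would start with the pure short exact sequence $0\to N\to M\to M/N\to 0$ in $R$-$\Mod$ and apply the character dual functor $(-)^{*}=\Hom_{\Z}(-,\Q/\Z)$. Since the dual of a pure exact sequence is split exact in $\Mod$-$R$, this yields $0\to (M/N)^{*}\to M^{*}\to N^{*}\to 0$ with $M^{*}\cong N^{*}\oplus (M/N)^{*}$. Proposition 2.4(2) then converts membership of $M$, $N$, $M/N$ in $\mathcal{WI}_{k}^{n}(R)$ to membership of $M^{*}$, $N^{*}$, $(M/N)^{*}$ in $\mathcal{WF}_{k}^{n}(R^{op})$. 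It remains to note that for any special super finitely presented left $R$-module $K_{n-1}$,
$$\Tor_{k+1}^{R}(N^{*}\oplus (M/N)^{*},K_{n-1})\cong \Tor_{k+1}^{R}(N^{*},K_{n-1})\oplus \Tor_{k+1}^{R}((M/N)^{*},K_{n-1}),$$
so by Proposition 2.3 one has $M^{*}\in\mathcal{WF}_{k}^{n}(R^{op})$ if and only if both $N^{*}$ and $(M/N)^{*}$ lie in $\mathcal{WF}_{k}^{n}(R^{op})$. Chaining the three equivalences delivers (1). The backward direction is, of course, also covered by taking $N=0$.

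Part (2) is the symmetric dual. Given a pure exact sequence $0\to N\to M\to M/N\to 0$ in $\Mod$-$R$, character duality produces a split exact sequence $0\to (M/N)^{*}\to M^{*}\to N^{*}\to 0$ of left $R$-modules with $M^{*}\cong N^{*}\oplus (M/N)^{*}$. Proposition 2.4(1) transfers the $\mathcal{WF}_{k}^{n}(R^{op})$ condition on $M$, $N$, $M/N$ to the $\mathcal{WI}_{k}^{n}(R)$ condition on their duals, and the additivity isomorphism $\Ext_{R}^{k+1}(K_{n-1},N^{*}\oplus(M/N)^{*})\cong \Ext_{R}^{k+1}(K_{n-1},N^{*})\oplus \Ext_{R}^{k+1}(K_{n-1},(M/N)^{*})$ together with Proposition 2.2 shows that $N^{*}\oplus(M/N)^{*}\in \mathcal{WI}_{k}^{n}(R)$ if and only if each summand is. Chaining equivalences concludes (2). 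The only minor step to verify is the closure of $\mathcal{WF}_{k}^{n}(R^{op})$ and $\mathcal{WI}_{k}^{n}(R)$ under finite direct summands, but this is immediate from the $\Ext$/$\Tor$ characterizations recalled above, so I do not foresee a genuine obstacle.
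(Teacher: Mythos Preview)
Your proposal is correct and follows essentially the same approach as the paper: the paper's proof simply says ``Similar to the proof of Proposition~\ref{1.6} by using Proposition~\ref{2.4},'' i.e., dualize the pure sequence to a split one, apply Proposition~\ref{2.4} to pass between $\mathcal{WI}_{k}^{n}(R)$ and $\mathcal{WF}_{k}^{n}(R^{op})$, and use closure under direct summands. Your use of the additivity of $\Ext$ and $\Tor$ (via Propositions~\ref{2.2} and~\ref{2.3}) is exactly the $k$-level analogue of the role Proposition~\ref{1.5} plays in the $k=0$ case.
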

\begin{proof}
Similar to the proof of Proposition \ref{1.6} by using 
Proposition \ref{2.4}.
\end{proof}
\section{Covers and preenvelopes by modules with $n$-weak injective and $n$-weak
flat dimension at most $k$}
\ \ Let $\Y$ be a class of $R$-modules and $M$ be an 
 $R$-module. Following \cite{EM}, we say that a  morphism $f : F\rightarrow M$ is a
$\Y$-precover of $M$ if $F\in\Y$ and ${\rm Hom}_{R}(F^{'}, F) \rightarrow {\rm Hom}_{R}(F^{'},M)\rightarrow 0$ is exact
for all $F^{'}\in\Y$. Moreover, if whenever a  morphism $g : F\rightarrow F$ such that
$fg = f$ is an automorphism of $F$, then $f : F\rightarrow M$ is called an $\Y$-cover of $M$. The
class $\Y$ is called (pre)covering if each $R$-module has a $\Y$-(pre)cover. Dually,
the notions of $\Y$-preenvelopes, $\Y$-envelopes and (pre)enveloping classes are defined.

A duality pair over $R$ \cite{HJ} is a pair $(\mathcal{M}, \mathcal{C})$, where $\mathcal{M}$ is a class of
left $R$-modules and $\mathcal{C}$ is a class of right $R$-
modules, subject to the following conditions:
(1) For an $R$-module $M$, one has $M\in \mathcal{M}$ if and only if $M^{*}\in \mathcal{C}$.
(2) $\mathcal{C}$ is closed under direct summands and finite direct sums.\\
A duality pair $(\mathcal{M}, \mathcal{C})$ is called (co)product-closed if the class $\mathcal{M}$ is closed under
(co)products in the category of all left $R$-modules.
A duality pair $(\mathcal{M}, \mathcal{C})$ is called perfect if it is coproduct-closed, $\mathcal{M}$ is closed
under extensions, and $R$ belongs to $\mathcal{M}$.

 Let $\X$ be a class of $R$-modules. We denote by $\mathscr{I}(R)$ the class of injective left modules and by $\mathscr{P}(R)$ the class of projective right modules. We call $\X$ injectively resolving \cite{HH} if $\mathscr{I}(R)\subseteq \X$, and for every short exact sequence
$0\rightarrow A\rightarrow B\rightarrow C\rightarrow0$ with $A\in\X$, $B\in\X$ if and only if $C\in\X$. Also, we call $\X$ projectively resolving if $\mathscr{P}(R)\subseteq \X$, and for every short exact sequence
$0\rightarrow A\rightarrow B\rightarrow C\rightarrow0$ with $C\in\X$, $A\in\X$ if and only if $B\in\X$.

In this section,  by the use of duality pairs, we investigate $\mathcal{WI}_{k}^{n}(R)$ and $\mathcal{WF}_{k}^{n}(R^{op})$ as preenveloping and covering classes

\begin{prop}\label{2.12}
The pair $(\mathcal{WI}_{ k}^{n}(R), \mathcal{WF}_{ k}^{n}(R^{op}))$ is a duality pair.
\end{prop}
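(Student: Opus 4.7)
The plan is to verify the two defining conditions of a duality pair directly from earlier results in the paper, so the proof should be short.

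For condition (1), namely that a left $R$-module $M$ lies in $\mathcal{WI}_{k}^{n}(R)$ if and only if its character module $M^{*}={\rm Hom}_{\mathbb{Z}}(M,\mathbb{Q}/\mathbb{Z})$ lies in $\mathcal{WF}_{k}^{n}(R^{op})$, I would simply invoke Proposition \ref{2.4}(2), which states exactly this equivalence. No further argument is needed; the inductive proof was carried out already when establishing that proposition.

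For condition (2), I need to show that $\mathcal{WF}_{k}^{n}(R^{op})$ is closed under direct summands and under finite direct sums. Here the plan is to use the cohomological characterization from Proposition \ref{2.3}: a right $R$-module $N$ belongs to $\mathcal{WF}_{k}^{n}(R^{op})$ precisely when ${\rm Tor}_{k+1}^{R}(N, K_{n-1}) = 0$ for every special super finitely presented left $R$-module $K_{n-1}$. Since ${\rm Tor}_{k+1}^{R}(-, K_{n-1})$ is an additive functor that commutes with finite (indeed, arbitrary) direct sums, both closure properties follow immediately: for a finite direct sum $N = \bigoplus_{i=1}^{m} N_i$, one has ${\rm Tor}_{k+1}^{R}(N, K_{n-1}) \cong \bigoplus_{i=1}^{m} {\rm Tor}_{k+1}^{R}(N_i, K_{n-1})$, which vanishes iff each summand vanishes; and direct summands are detected by the same additive splitting.

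Since there is really no obstacle here, I do not anticipate any hard step. The only conceptual point worth mentioning is that condition (2) of the duality pair definition is only required of the right-module class $\mathcal{C} = \mathcal{WF}_{k}^{n}(R^{op})$, not of $\mathcal{M} = \mathcal{WI}_{k}^{n}(R)$, which is fortunate since a direct sum of $n$-weak injectives was treated separately in Proposition \ref{1.5a} whereas the Tor-based characterization makes the right-module case completely transparent.
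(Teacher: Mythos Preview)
Your proof is correct and, for condition (2), more direct than the paper's. Both you and the paper invoke Proposition \ref{2.4}(2) for condition (1). For condition (2), however, the paper proceeds differently: it first proves closure of $\mathcal{WF}_{k}^{n}(R^{op})$ under \emph{arbitrary} direct sums by induction on $k$ (using short exact sequences $0 \to L_i \to P_i \to M_i \to 0$ with $P_i$ projective and the induction hypothesis on the $L_i$), and then obtains closure under direct summands by observing that $\mathcal{WF}_{k}^{n}(R^{op})$ is projectively resolving (via Corollary \ref{2.3a}(2)) and applying Holm's result \cite[Proposition 1.4]{HH}. Your route, reading both closure properties off the vanishing condition ${\rm Tor}_{k+1}^{R}(-,K_{n-1})=0$ from Proposition \ref{2.3} and the additivity of Tor, is shorter and entirely sufficient for the duality-pair definition, which asks only for finite direct sums. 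The paper's extra work does yield closure under arbitrary coproducts, a fact it reuses later, but for the present proposition your argument is the cleaner one.
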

\begin{proof}
Let $\{M_{i}\}_{i\in I}$ be a family of right $R$-modules. If every $M_i$ is in $\mathcal{WF}_{ k}^{n}(R^{op})$, then we claim that $\bigoplus_{i\in I}M_i$ is in $\mathcal{WF}_{ k}^{n}(R^{op})$. By induction, if $k=0$, then by Proposition \ref{1.5}(2), $\bigoplus_{i\in I} M_{i}$ is $n$-weak flat.  For every $R$-module $M_i$, there exists a short exact sequence $0\rightarrow L_i\rightarrow P_i\rightarrow M_i\rightarrow 0$  of right $R$-modules, where  $P_i$ is  projective. Thus we have the following short exact sequence $0\rightarrow \bigoplus_{i\in I} L_i\rightarrow \bigoplus_{i\in I}P_i\rightarrow \bigoplus_{i\in I}M_i\rightarrow 0$. Since each $L_i$ is in $\mathcal{WF}_{ k-1}^{n}(R^{op})$, by the induction hypothesis, we have that $\bigoplus_{i\in I}L_i$ is in $\mathcal{WF}_{ k-1}^{n}(R^{op})$, and so it follows that $\bigoplus_{i\in I}M_i$ is in $\mathcal{WF}_{k}^{n}(R^{op})$.
Also by Proposition \ref{2.4}(2), $M$ is in $\mathcal{WI}_{ k}^{n}(R)$ if and only if $M^{*}$ is in $\mathcal{WF}_{ k}^{n}(R^{op})$. On the other hand, we have $\mathscr{P}(R)\subseteq \mathcal{WF}_{ k}^{n}(R^{op})$, and by Corollary \ref{2.3a}(2), we deduce that 
the class $\mathcal{WF}_{ k}^{n}(R^{op})$ is projectively resolving. Then by  \cite[Proposition 1.4]{HH}, the class $\mathcal{WF}_{ k}^{n}(R^{op})$
 is closed under direct summands, and so we conclude that $(\mathcal{WI}_{ k}^{n}(R), \mathcal{WF}_{ k}^{n}(R^{op}))$ is a duality pair.
\end{proof}

\begin{prop}\label{2.13}
The  pair $(\mathcal{WF}_{ k}^{n}(R^{op}), \mathcal{WI}_{ k}^{n}(R))$ is a duality pair.
\end{prop}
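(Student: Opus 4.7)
The plan is to mirror the proof of Proposition \ref{2.12} verbatim, with the roles of left and right $R$-modules interchanged. I must verify the two conditions from the definition of a duality pair applied to $(\mathcal{WF}_{k}^{n}(R^{op}), \mathcal{WI}_{k}^{n}(R))$: namely, (1) for every right $R$-module $M$, $M \in \mathcal{WF}_{k}^{n}(R^{op})$ if and only if $M^{*} \in \mathcal{WI}_{k}^{n}(R)$, and (2) $\mathcal{WI}_{k}^{n}(R)$ is closed under direct summands and finite direct sums. Condition (1) is immediate from Proposition \ref{2.4}(1), so the work is concentrated in condition (2).

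For closure under direct sums (in fact, arbitrary ones), I argue by induction on $k$, exactly as in the proof of Proposition \ref{2.12}. The base case $k=0$ is Proposition \ref{1.5a}. For the inductive step, given a family $\{M_i\}_{i \in I}$ in $\mathcal{WI}_{k}^{n}(R)$, for each $i$ I take a short exact sequence $0 \to M_i \to E_i \to L_i \to 0$ with $E_i$ injective; a quick application of Proposition \ref{1.7}(1) to the long exact sequence of Ext shows that $L_i \in \mathcal{WI}_{k-1}^{n}(R)$. Forming direct sums yields the exact sequence $0 \to \bigoplus_i M_i \to \bigoplus_i E_i \to \bigoplus_i L_i \to 0$, whose middle term is $n$-weak injective by Proposition \ref{1.5a} and whose right-hand term belongs to $\mathcal{WI}_{k-1}^{n}(R)$ by the inductive hypothesis. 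A final long-exact-sequence computation, again using Proposition \ref{1.7}(1), places $\bigoplus_i M_i$ in $\mathcal{WI}_{k}^{n}(R)$.

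Closure under direct summands is then immediate from the additive character of Ext: if $M = M_1 \oplus M_2 \in \mathcal{WI}_{k}^{n}(R)$, then for every special super finitely presented module $K_{n-1}$,
\[
0 = {\rm Ext}_{R}^{k+1}(K_{n-1}, M) \cong {\rm Ext}_{R}^{k+1}(K_{n-1}, M_1) \oplus {\rm Ext}_{R}^{k+1}(K_{n-1}, M_2),
\]
so each summand lies in $\mathcal{WI}_{k}^{n}(R)$ by Proposition \ref{2.2}. Alternatively, mirroring Proposition \ref{2.12} more literally, one may first verify via Corollary \ref{2.3a}(1) together with its dual (the extension statement, obtained directly from the Ext long exact sequence) that $\mathcal{WI}_{k}^{n}(R)$ is injectively resolving, and then appeal to the injective analogue of \cite[Proposition 1.4]{HH}.

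The only point requiring genuine care is the inductive step for direct sums, where two long exact sequences of Ext must be chased and the vanishing positions at indices $k$ and $k+1$ tracked precisely; the remaining verifications are formal.
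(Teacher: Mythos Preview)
Your proof is correct and structurally parallel to the paper's, but there is one small difference worth noting. To obtain closure of $\mathcal{WI}_{k}^{n}(R)$ under finite direct sums, the paper first proves closure under \emph{arbitrary direct products} (induction on $k$ with base case Proposition~\ref{1.5}(1), taking products of the short exact sequences $0\to M_i\to E_i\to D_i\to 0$), and then observes that finite direct sums coincide with finite direct products; you instead prove closure under \emph{arbitrary direct sums} directly (same induction, base case Proposition~\ref{1.5a}). Both inductions are formally identical and both closure properties are needed later in the paper (products in Theorem~\ref{2.14}, coproducts in Proposition~\ref{2.17}), so neither choice is wasted. For closure under direct summands the paper takes exactly the injectively-resolving route via Corollary~\ref{2.3a}(1) and \cite[Proposition~1.4]{HH} that you list as your second option; your first option, the elementary splitting of $\mathrm{Ext}^{k+1}_R(K_{n-1},-)$, is of course equally valid and slightly more self-contained.
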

\begin{proof}
By Proposition \ref{2.4}(1), $M$ is in $\mathcal{WF}_{ k}^{n}(R^{op})$ if and only if $M^{*}$ is in $\mathcal{WI}_{ k}^{n}(R)$. Let $\{M_{i}\}_{i\in I}$ be a family of left $R$-modules. Suppose that each $M_i$ is in $\mathcal{WI}_{ k}^{n}(R)$ and let us show that $\prod_{i\in I}M_i$ is in $\mathcal{WI}_{ k}^{n}(R)$. By induction, if $k=0$, then by Proposition \ref{1.5}(1), $\prod_{i\in I} M_{i}$ is $n$-weak injective. There exists a short exact sequence $0\rightarrow M_i\rightarrow E_i\rightarrow D_i\rightarrow 0$  of left $R$-modules, where  $E_i$ is  injective, and  consequently, we have the following short exact sequence $0\rightarrow \prod_{i\in I} M_i\rightarrow \prod_{i\in I}E_i\rightarrow \prod_{i\in I}D_i\rightarrow 0$. Since each $D_i$ is in $\mathcal{WI}_{ k-1}^{n}(R)$, by induction hypothesis, we deduce that $\prod_{i\in I}D_i$ is in $\mathcal{WI}_{ k-1}^{n}(R)$, and it follows easily that $\prod_{i\in I}M_i$ is in $\mathcal{WI}_{ k}^{n}(R)$. So in particular, every finite direct sum of family $\{M_i\}_{i\in I}$ in $\mathcal{WI}_{ k}^{n}(R)$ is in $\mathcal{WI}_{k}^{n}(R)$.
Also, we have $\mathscr{I}(R)\subseteq \mathcal{WI}_{ k}^{n}(R)$, and by Corollary \ref{2.3a}(1), it is clear that the class  $\mathcal{WI}_{ k}^{n}(R)$ is injectively resolving. Then, by  \cite[Proposition 1.4]{HH}, the class $\mathcal{WI}_{ k}^{n}(R)$
 is closed under direct summands, and hence, it follows that $(\mathcal{WF}_{ k}^{n}(R^{op}), \mathcal{WI}_{ k}^{n}(R))$ is a duality pair.
\end{proof}
\begin{lem}\label{1.14}{\rm (\cite[Theorem 3.1]{HJ})}
Let $(\mathcal{M}, \mathcal{C})$ is a duality pair. Then $\mathcal{M}$ is closed under pure submodules, pure quotients, and pure extensions. Furthermore, the following hold:
\begin{enumerate}
\item [\rm (1)] 
 If $(\mathcal{M}, \mathcal{C})$ is product-closed then $\mathcal{M}$ is preenveloping.
\item [\rm (2)] 
If $(\mathcal{M}, \mathcal{C})$ is coproduct-closed then $\mathcal{M}$ is covering.
\item [\rm (3)] 
If $(\mathcal{M}, \mathcal{C})$ is perfect then $(\mathcal{M}, \mathcal{M}^{\bot})$ is a perfect cotorsion pair.
\end{enumerate}
\end{lem}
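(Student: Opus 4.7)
The main tool is the standard duality between pure exact sequences and split dual sequences: $0\to A\to B\to C\to 0$ is pure exact if and only if the character sequence $0\to C^{*}\to B^{*}\to A^{*}\to 0$ splits, where $(-)^{*}=\Hom_{\mathbb{Z}}(-,\mathbb{Q}/\mathbb{Z})$. Combined with the defining property $M\in\mathcal{M}\Leftrightarrow M^{*}\in\mathcal{C}$ of a duality pair, this transports the closure of $\mathcal{C}$ under direct summands and finite direct sums onto pure-exact sequences in $\mathcal{M}$. Concretely, for a pure exact sequence with $B\in\mathcal{M}$, the splitting of the dual sequence exhibits $A^{*}$ and $C^{*}$ as direct summands of $B^{*}\in\mathcal{C}$, so both lie in $\mathcal{C}$, giving $A,C\in\mathcal{M}$; and if $A,C\in\mathcal{M}$, then $B^{*}\cong A^{*}\oplus C^{*}\in\mathcal{C}$ by the finite direct-sum clause, whence $B\in\mathcal{M}$. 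This handles the main assertion about pure submodules, pure quotients, and pure extensions.

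For (1), product-closedness together with the closure under pure submodules just proved puts $\mathcal{M}$ into the hypotheses of a standard Enochs-style construction: for each module $M$, the canonical morphism into the product of a representative set of maps $M\to N$ with $N\in\mathcal{M}$ of bounded cardinality serves as an $\mathcal{M}$-preenvelope, the pure-submodule closure ensuring that an arbitrary morphism into $\mathcal{M}$ factors through such a bounded-size target. For (2), coproduct-closedness combined with closure under pure quotients yields closure of $\mathcal{M}$ under direct limits, since every direct limit is a pure quotient of the coproduct of the directed system. Enochs' theorem on the existence of covers from classes closed under direct limits and coproducts then produces the $\mathcal{M}$-covers.

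For (3), the perfect condition adds closure under extensions and the hypothesis $R\in\mathcal{M}$. The plan is to realize $\mathcal{M}$ as deconstructible, i.e.\ to exhibit a set $\mathcal{S}\subseteq\mathcal{M}$ such that every module in $\mathcal{M}$ is a transfinite extension of modules in $\mathcal{S}$; this uses the pure-closure properties already obtained together with a cardinality/filtration argument. Eklof--Trlifaj then delivers a complete cotorsion pair $(\mathcal{M},\mathcal{M}^{\bot})$, and perfectness is the combination of the cover existence from (2) with the automatic envelope existence for $\mathcal{M}^{\bot}$ once completeness and closure under direct limits are in place. The main obstacle, as usual in this setup, will be the deconstructibility step, since the rest of the argument is a packaging of well-known theorems once the pure-closure of the first paragraph is established.
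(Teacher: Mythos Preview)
The paper does not prove this lemma at all: it is stated with the attribution ``\cite[Theorem 3.1]{HJ}'' and used as a black box in the proofs of Theorems~\ref{2.14} and~\ref{2.16} and Propositions~\ref{1.17u} and~\ref{1.18}. So there is no ``paper's own proof'' to compare your attempt against.

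That said, your sketch is essentially the argument given by Holm and J{\o}rgensen in the cited reference. The pure-closure part via the splitting of the character sequence is exactly their proof. For (1) they appeal to the Rada--Saor\'{\i}n criterion (closure under products and pure submodules implies preenveloping), which is what you describe. For (2) they use, as you do, that closure under coproducts and pure quotients gives closure under direct limits, and then the Bican--El~Bashir--Enochs theorem (rather than an ``Enochs theorem'' per se; you may want to name it precisely). For (3) they invoke a Kaplansky-class argument to get deconstructibility and then Eklof--Trlifaj, which matches your plan; your honest flagging of deconstructibility as the main technical point is appropriate. Nothing in your outline is wrong, but since the paper treats the lemma as a citation, a one-line reference to \cite{HJ} would suffice here.
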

\begin{thm}\label{2.14}
The class $\mathcal{WI}_{ k}^{n}(R)$ is  covering and preenveloping.
\end{thm}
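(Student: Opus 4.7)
The strategy is to invoke Lemma~\ref{1.14} applied to the duality pair $(\mathcal{WI}_{k}^{n}(R), \mathcal{WF}_{k}^{n}(R^{op}))$ supplied by Proposition~\ref{2.12}. Lemma~\ref{1.14} converts product-closedness of the first class into the preenveloping property and coproduct-closedness into the covering property, so the task reduces to verifying that $\mathcal{WI}_{k}^{n}(R)$ is closed under arbitrary products and under arbitrary coproducts.

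For closure under products, I would reuse (with only a change of indexing) the induction on $k$ already carried out in the proof of Proposition~\ref{2.13}: given a family $\{M_i\}_{i\in I}$ in $\mathcal{WI}_{k}^{n}(R)$, embed each $M_i$ in a short exact sequence $0 \to M_i \to E_i \to D_i \to 0$ with $E_i$ injective; observe that $\prod_i E_i$ is still injective while the induction hypothesis places $\prod_i D_i$ in $\mathcal{WI}_{k-1}^{n}(R)$; then Corollary~\ref{2.3a}(1), applied to the short exact sequence obtained by taking products, puts $\prod_i M_i$ in $\mathcal{WI}_{k}^{n}(R)$. The base case $k=0$ is Proposition~\ref{1.5}(1).

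For closure under coproducts, I would use Proposition~\ref{2.2} as the test: it suffices to show $\Ext_{R}^{k+1}(K_{n-1}, \bigoplus_{i\in I} M_i) = 0$ for every special super finitely presented left $R$-module $K_{n-1}$. By Definition~\ref{1.1} such a $K_{n-1}$ is itself super finitely presented, so it admits a projective resolution all of whose terms are finitely generated and projective. Since $\Hom_R(P, -)$ commutes with arbitrary direct sums whenever $P$ is finitely generated projective, and since coproducts are exact in the category of modules, the functor $\Ext_{R}^{k+1}(K_{n-1}, -)$ commutes with arbitrary direct sums. Therefore $\Ext_{R}^{k+1}(K_{n-1}, \bigoplus_i M_i) \cong \bigoplus_i \Ext_{R}^{k+1}(K_{n-1}, M_i)$, which vanishes by hypothesis on the $M_i$.

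With both closure properties in hand, Lemma~\ref{1.14}(1) delivers the preenveloping statement and Lemma~\ref{1.14}(2) delivers the covering statement. The only delicate step is coproduct closure: an injective-envelope induction mirroring the product argument fails because arbitrary coproducts of injectives need not be injective outside the Noetherian setting, so the Ext-commutation route (made available by the finitely generated projective resolution of $K_{n-1}$ built into the definition of special super finitely presented) is what actually carries the argument.
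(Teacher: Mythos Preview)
Your proof is correct and follows the same overall strategy as the paper: apply Lemma~\ref{1.14} to the duality pair of Proposition~\ref{2.12} after checking that $\mathcal{WI}_{k}^{n}(R)$ is closed under products (via the induction in the proof of Proposition~\ref{2.13}) and under coproducts. The only difference lies in the coproduct step. You argue directly that $\Ext_{R}^{k+1}(K_{n-1},-)$ commutes with arbitrary direct sums because $K_{n-1}$ admits a resolution by finitely generated projectives, whereas the paper says ``similarly by using Proposition~\ref{1.5a}'' and runs the same induction on $k$ as for products. Your worry that such an induction would fail---because coproducts of injectives need not be injective---is misplaced: in the sequence $0\to\bigoplus_{i} M_i\to\bigoplus_{i} E_i\to\bigoplus_{i} N_i\to 0$ one only needs $\bigoplus_{i} E_i$ to be $n$-weak injective, not injective, and this is precisely what Proposition~\ref{1.5a} provides; then Proposition~\ref{2.2} (implication $(5)\Rightarrow(1)$) closes the induction. (The paper spells this out in full in the proof of Proposition~\ref{2.17}, $(1)\Rightarrow(6)$.) Your direct $\Ext$-commutation route is slightly cleaner in that it avoids the induction altogether and is essentially the same computation as in the proof of Proposition~\ref{1.5a} carried to degree $k+1$; the paper's inductive route, on the other hand, makes the role of the resolution characterization in Proposition~\ref{2.2} more visible. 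Both are valid.
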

\begin{proof}
By Proposition \ref{2.6}(1), the class $\mathcal{WI}_{ k}^{n}(R)$ is closed under pure submodules, pure quotients, and pure extensions. Also, by the proof of the Proposition \ref{2.13}, the class $\mathcal{WI}_{ k}^{n}(R)$ is closed under direct products, and similarly by using  Proposition \ref{1.5a}, we see that the class $\mathcal{WI}_{ k}^{n}(R)$ is also closed under direct sums. By Proposition \ref{2.12}, the pair  $(\mathcal{WI}_{ k}^{n}(R), \mathcal{WF}_{ k}^{n}(R^{op}))$  is a duality pair, and so from Lemma \ref{1.14} follows that the class $\mathcal{WI}_{ k}^{n}(R)$ is  covering and preenveloping.
\end{proof}
\begin{thm}\label{2.16}
The class $\mathcal{WF}_{ k}^{n}(R^{op})$ is  covering and preenveloping.
\end{thm}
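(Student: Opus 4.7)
The plan is to mirror the argument used for Theorem 2.14, but now using the duality pair $(\mathcal{WF}_{k}^{n}(R^{op}), \mathcal{WI}_{k}^{n}(R))$ furnished by Proposition \ref{2.13} and invoking Lemma \ref{1.14}. Three closure properties of $\mathcal{WF}_{k}^{n}(R^{op})$ must be verified before Lemma \ref{1.14} can be applied: closure under pure submodules, pure quotients and pure extensions; closure under direct sums (for covering, via Lemma \ref{1.14}(2)); and closure under direct products (for preenveloping, via Lemma \ref{1.14}(1)).

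The first batch of closure properties is immediate from Proposition \ref{2.6}(2) (and is also part of the general conclusion of Lemma \ref{1.14}). The closure under direct sums was established inside the proof of Proposition \ref{2.12}: the base case $k=0$ is Proposition \ref{1.5}(2), and the inductive step assembles short exact sequences $0\rightarrow L_i\rightarrow P_i\rightarrow M_i\rightarrow 0$ with $P_i$ projective and $L_i\in\mathcal{WF}_{k-1}^{n}(R^{op})$ and takes their direct sum, using Corollary \ref{2.3a}(2) to propagate the bound. Combined with Proposition \ref{2.13}, Lemma \ref{1.14}(2) then yields that $\mathcal{WF}_{k}^{n}(R^{op})$ is covering.

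The main obstacle is closure under direct products for arbitrary $k$, since a product of projective modules need not be projective; the direct inductive mimicry from the direct-sum case breaks down. For $k=0$ this is exactly Theorem \ref{1.11}. For $k\geq 1$ my plan is to adapt the strategy of Theorem \ref{1.11}: given a family $\{M_i\}_{i\in I}$ in $\mathcal{WF}_{k}^{n}(R^{op})$, the canonical maps $M_i\hookrightarrow M_i^{**}$ are pure by \cite[Proposition 2.4.5]{JX}, hence $\prod_{i\in I}M_i$ is pure in $\prod_{i\in I}M_i^{**}$. Using $\prod_{i\in I}M_i^{**}\cong(\bigoplus_{i\in I}M_i^{*})^{*}$ from \cite[Theorem 2.4]{Rot1}, Proposition \ref{2.4}(1) converts membership in $\mathcal{WF}_{k}^{n}(R^{op})$ of the $M_i$ into membership of each $M_i^{*}$ in $\mathcal{WI}_{k}^{n}(R)$. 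By the direct-sum closure of $\mathcal{WI}_{k}^{n}(R)$ (which was justified in the proof of Theorem \ref{2.14} by extending Proposition \ref{1.5a} inductively in $k$ via Corollary \ref{2.3a}(1)), the module $\bigoplus_{i\in I}M_i^{*}$ lies in $\mathcal{WI}_{k}^{n}(R)$, and then Proposition \ref{2.4}(2) places $(\bigoplus_{i\in I}M_i^{*})^{*}$ inside $\mathcal{WF}_{k}^{n}(R^{op})$. The pure-submodule closure from Proposition \ref{2.6}(2) then forces $\prod_{i\in I}M_i\in\mathcal{WF}_{k}^{n}(R^{op})$. Having verified product-closedness of the duality pair, Lemma \ref{1.14}(1) delivers the preenveloping conclusion and completes the proof.
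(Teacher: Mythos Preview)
Your argument is correct, but for the product-closure step it differs from the paper's. The paper proceeds by a direct induction on $k$: for $k=0$ it invokes Theorem~\ref{1.11}, and for $k\geq 1$ it takes short exact sequences $0\to L_i\to P_i\to M_i\to 0$ with $P_i$ projective, forms the product $0\to\prod L_i\to\prod P_i\to\prod M_i\to 0$, uses the induction hypothesis on $\prod L_i\in\mathcal{WF}_{k-1}^{n}(R^{op})$, and concludes via the long exact sequence in ${\rm Tor}$. Your worry that this breaks down because $\prod P_i$ need not be projective is unfounded: the argument does not require projectivity of $\prod P_i$, only that $\prod P_i$ be $n$-weak flat, and that follows from Theorem~\ref{1.11} applied to the projectives $P_i$. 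So the ``direct inductive mimicry'' in fact works.

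Your alternative route---lifting the double-dual/purity trick of Theorem~\ref{1.11} from $k=0$ to general $k$ via Proposition~\ref{2.4}, direct-sum closure of $\mathcal{WI}_k^n(R)$, and Proposition~\ref{2.6}(2)---is also valid and has the minor advantage of treating all $k$ uniformly without a separate inductive step. Both approaches ultimately rest on Proposition~\ref{1.5a} (direct-sum closure on the injective side), so neither is essentially more elementary; they just unpack the duality in a different order.
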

\begin{proof}
By Proposition \ref{2.6}(2), the class $\mathcal{WF}_{ k}^{n}(R^{op})$ is closed under pure submodules, pure quotients, and pure extensions. Now, we show that the class $\mathcal{WF}_{ k}^{n}(R^{op})$ is closed under direct products. 
If $\{M_{i}\}_{i\in I}$  is a family of right $R$-modules, where $M_i$ is in $\mathcal{WF}_{ k}^{n}(R^{op})$ for any $i\in I$, then by induction, if $k=0$, by Theorem \ref{1.11}, it follows that $M_i$ is $n$-weak flat if and only if $\prod_{i\in I}M_i$ is $n$-weak flat. There exists a short exact sequence $0\rightarrow L_i\rightarrow P_i\rightarrow M_i\rightarrow 0$  of right $R$-modules, where  $P_i$ is  projective. Then, there exists the short exact sequence $0\rightarrow \prod_{i\in I} L_i\rightarrow \prod_{i\in I}P_i\rightarrow \prod_{i\in I}M_i\rightarrow 0$. If $M_i$ is in $\mathcal{WF}_{ k}^{n}(R^{op})$, then $L_i$ is in $\mathcal{WF}_{ k-1}^{n}(R^{op})$. By induction hypothesis, $\prod_{i\in I}L_i$ is in $\mathcal{WF}_{ k-1}^{n}(R^{op})$, and so
we conclude that $\prod_{i\in I}M_i$ is in $\mathcal{WF}_{ k}^{n}(R^{op})$. Similarly, using Proposition \ref{1.5}(2), we see that the class $\mathcal{WF}_{ k}^{n}(R^{op})$ is closed under direct sums.
  Since the pair $(\mathcal{WF}_{ k}^{n}, \mathcal{WI}_{ k}^{n})$  is a duality pair by Proposition \ref{2.13}, we conclude that the class $\mathcal{WF}_{ k}^{n}(R^{op})$  is  covering and preenveloping from Lemma \ref{1.14}.
\end{proof}

Notice that $\mathcal{WI}_{ k}^{0}(R)= \mathcal{WI}_{ k}(R)$ and $\mathcal{WF}_{ k}^{0}(R^{op})=\mathcal{WF}_{ k}(R^{op})$. Indeed, by Remark \ref{1.3}(1), $\mathcal{WI}_{ k}(R)$
and $\mathcal{WF}_{ k}(R^{op})$
are the classes of left  $R$-modules and right $R$-modules with
weak injective dimension and weak flat dimension  at most $k$, respectively, see \cite{.NG}. Thus
by  Proposition \ref{2.4} and Theorems \ref{2.14} and \ref{2.16},  we have the following result:

\begin{cor}\label{1.14t}{\rm (\cite[Theorems 4.4, 4.5, 4.8 and 4.9]{.NG})}
\begin{enumerate}
\item [\rm (1)] 
The class $\mathcal{WI}_{k}(R)$ is  covering and preenveloping.
\item [\rm (2)] 
The class $\mathcal{WF}_{k}(R^{op})$ is  covering and preenveloping.
\end{enumerate}
\end{cor}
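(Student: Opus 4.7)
The plan is to specialize the general results already established for arbitrary $n$ to the case $n=0$. Theorems \ref{2.14} and \ref{2.16} state that for every non-negative integer $n$, the classes $\mathcal{WI}_{k}^{n}(R)$ and $\mathcal{WF}_{k}^{n}(R^{op})$ are covering and preenveloping; in particular, taking $n=0$ gives that $\mathcal{WI}_{k}^{0}(R)$ and $\mathcal{WF}_{k}^{0}(R^{op})$ are covering and preenveloping. To convert these statements into assertions about the classical classes $\mathcal{WI}_{k}(R)$ and $\mathcal{WF}_{k}(R^{op})$ from \cite{.NG}, I would invoke Remark \ref{1.3}(1): when $n=0$, an $n$-super finitely presented left $R$-module is precisely a super finitely presented left $R$-module, and hence an $n$-weak injective (resp. $n$-weak flat) module coincides with a weak injective (resp. weak flat) module. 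Passing to finite dimensions via Definition \ref{2.1}, this yields the identifications $\mathcal{WI}_{k}^{0}(R)=\mathcal{WI}_{k}(R)$ and $\mathcal{WF}_{k}^{0}(R^{op})=\mathcal{WF}_{k}(R^{op})$, which are also pointed out in the paragraph immediately preceding the statement.

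Combining the two ingredients gives (1) and (2) at once. There is essentially no obstacle here, since all the substantive work, namely constructing the duality pairs in Propositions \ref{2.12} and \ref{2.13}, verifying closure under pure subobjects and pure quotients through Proposition \ref{2.6}, checking closure under (co)products, and then applying Lemma \ref{1.14}, has already been carried out uniformly in $n$. The only thing worth underlining in the write-up is that setting $n=0$ in Definition \ref{2.1} is legitimate and that the class $\mathcal{WI}_{k}^{0}(R)$ (resp. $\mathcal{WF}_{k}^{0}(R^{op})$) really is nothing else than $\mathcal{WI}_{k}(R)$ (resp. $\mathcal{WF}_{k}(R^{op})$), so that \cite[Theorems 4.4, 4.5, 4.8 and 4.9]{.NG} drop out as a direct specialization of the framework developed here.
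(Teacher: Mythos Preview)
Your proposal is correct and follows essentially the same approach as the paper: the paragraph preceding the corollary makes exactly the identification $\mathcal{WI}_{k}^{0}(R)=\mathcal{WI}_{k}(R)$ and $\mathcal{WF}_{k}^{0}(R^{op})=\mathcal{WF}_{k}(R^{op})$ via Remark~\ref{1.3}(1), and then deduces the result from Theorems~\ref{2.14} and~\ref{2.16} (together with Proposition~\ref{2.4}). Your write-up is slightly more explicit about why the identification holds and about the machinery behind those theorems, but the argument is the same specialization to $n=0$.
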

Also, if $k=0$, then from Proposition \ref{2.4} and Theorems \ref{2.14} and \ref{2.16} we have:
\begin{cor}\label{1.20t}{\rm (\cite[Theorem 2.5]{JZW})}
The following assertions hold:
\begin{enumerate}
\item [\rm (1)] 
Every left (resp. right) $R$-module has an $fp_n$-injective (resp. $fp_n$-flat) cover.
\item [\rm (2)] 
Every left (resp. right) $R$-module has an $fp_n$-injective (resp. $fp_n$-flat) preenvelope.
\item [\rm (3)] 
If $M\rightarrow N$ is an $fp_n$-injective (resp. $fp_n$-flat) preenvelope of a left (resp. right) $R$-module $M$, then $N^*\rightarrow M^*$ is an $fp_n$-flat (resp. $fp_n$-injective) precover of $M^*$.
\end{enumerate}
\end{cor}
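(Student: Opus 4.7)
The proof is essentially a specialization argument. My plan is to deduce (1) and (2) by setting $k=0$ in Theorems \ref{2.14} and \ref{2.16}, and then derive (3) via a standard double-dualization argument using Proposition \ref{2.4}.

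For (1) and (2): Since $\mathcal{WI}_{0}^{n}(R) = \mathcal{WI}^{n}(R)$ and $\mathcal{WF}_{0}^{n}(R^{op}) = \mathcal{WF}^{n}(R^{op})$ by the convention following Definition \ref{2.1}, Theorems \ref{2.14} and \ref{2.16} at $k=0$ immediately yield that both classes are covering and preenveloping. Under the identification of $\mathcal{WI}^{n}(R)$ with the class of $fp_n$-injective modules (see Remark \ref{1.3}(6)) and $\mathcal{WF}^{n}(R^{op})$ with the class of $fp_n$-flat modules, these assertions deliver the desired cover and preenvelope existence statements.

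For (3): Let $\phi : M \to N$ be an $fp_n$-injective preenvelope of $M$. First, $N^{*}$ is $n$-weak flat by Proposition \ref{2.4}(2). To verify that $\phi^{*} : N^{*} \to M^{*}$ is a precover, I would take any $n$-weak flat right $R$-module $L$ together with a morphism $\alpha : L \to M^{*}$. Since $L^{*}$ is $n$-weak injective by Proposition \ref{2.4}(1), applying the preenvelope property of $\phi$ to the composite $\alpha^{*} \circ \iota_{M} : M \to M^{**} \to L^{*}$ (where $\iota_{M} : M \to M^{**}$ is the canonical evaluation) yields a factorization $\alpha^{*} \circ \iota_{M} = \beta \circ \phi$ for some $\beta : N \to L^{*}$. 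Dualizing and pre-composing with $\iota_{L} : L \to L^{**}$ then produces a morphism $\beta^{*} \circ \iota_{L} : L \to N^{*}$, and the standard identity $\iota_{M}^{*} \circ \iota_{M^{*}} = \mathrm{id}_{M^{*}}$ combined with the naturality of $\iota$ shows that $\phi^{*} \circ (\beta^{*} \circ \iota_{L}) = \alpha$, as required.

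The main obstacle is the diagram chase in (3): one must carefully track the natural evaluation maps $\iota_{M}$, $\iota_{L}$, and $\iota_{M^{*}}$ to confirm that the reassembled morphism really recovers $\alpha$ on the nose, rather than $\iota_{M^{*}} \circ \alpha$ or some nearby variant. Once the relevant naturality squares are drawn, the argument collapses via the triangle identity for double dualization, so (3) is essentially a formal consequence of Proposition \ref{2.4} once (1) and (2) are in hand.
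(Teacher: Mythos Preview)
Your approach for (1) and (2) follows the paper's one-line indication (set $k=0$ and invoke Theorems \ref{2.14} and \ref{2.16} together with Proposition \ref{2.4}), but you insert an explicit identification that is false: $\mathcal{WI}^{n}(R)$ is \emph{not} in general the class of $fp_n$-injective modules. Remark \ref{1.3}(6) only gives the inclusion $fp_nI \subseteq \mathcal{WI}^{n}(R)$ (and likewise for flats), and Example \ref{1.3a}(3) exhibits a ring over which every module is $2$-weak injective while not every module is $fp_2$-injective. Thus Theorems \ref{2.14} and \ref{2.16} at $k=0$ deliver covers and preenvelopes only for the $n$-weak classes, and your passage to the $fp_n$-classes is not justified by anything in the paper.

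To be fair, the paper's own deduction is equally terse and suffers from the same gap: nothing proved here lets one transport covering/preenveloping from the strictly larger $n$-weak classes down to the $fp_n$-classes. What is presumably intended is that the \emph{method} (duality pairs \`a la \cite{HJ}, plus closure under products, sums, and pure subobjects) applies verbatim to the $fp_n$-classes, whose relevant closure and duality properties are established directly in \cite{JZW}; so Corollary \ref{1.20t} is recovered by rerunning the argument, not by specializing the conclusion. Your treatment of (3) via double dualization is a correct standard argument once one has the $fp_n$-analogue of Proposition \ref{2.4} (which \cite{JZW} supplies); the paper itself does not spell (3) out at all.
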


Now we give some equivalent characterizations for $_RR$ to be in $\mathcal{WI}_{ k}^{n}(R)$ in terms of properties of  $\mathcal{WI}_{ k}^{n}(R)$ and  $\mathcal{WF}_{ k}^{n}(R^{op})$.
\begin{prop}\label{2.17}
The following assertions are equivalent:
\begin{enumerate}
\item [\rm (1)]
$_RR$ is in $\mathcal{WI}_{ k}^{n}(R)$.
\item [\rm (2)]
Every right $R$-module has a monic $\mathcal{WF}_{ k}^{n}(R^{op})$-preenvelope.
\item [\rm (3)]
Every  injective right $R$-module is in $\mathcal{WF}_{ k}^{n}(R^{op})$.
\item [\rm (4)]
Every flat left $R$-module is in $\mathcal{WI}_{ k}^{n}(R)$.
\item [\rm (5)]
Every projective left $R$-module is in $\mathcal{WI}_{ k}^{n}(R)$.
\item [\rm (6)]
Every  left $R$-module  has an epic $\mathcal{WI}_{ k}^{n}(R)$-cover.
\end{enumerate}
\end{prop}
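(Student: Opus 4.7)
The plan is to establish the equivalences as a web of implications, using the fact that ${}_{R}R$ is simultaneously flat and projective as the key bridge, together with the closure properties of $\mathcal{WI}_{k}^{n}(R)$ and $\mathcal{WF}_{k}^{n}(R^{op})$ already developed in Sections 2 and 3. I would organize the argument into three clusters: $(1)\Leftrightarrow(4)\Leftrightarrow(5)$, $(1)\Leftrightarrow(3)\Leftrightarrow(2)$, and $(1)\Leftrightarrow(6)$.

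For the first cluster, $(1)\Rightarrow(5)$ uses that every projective left $R$-module is a direct summand of a direct sum of copies of ${}_{R}R$; the class $\mathcal{WI}_{k}^{n}(R)$ is closed under such direct sums (as in the proof of Theorem \ref{2.14}) and under direct summands (it is injectively resolving by Corollary \ref{2.3a}(1), so \cite[Proposition 1.4]{HH} applies). For $(5)\Rightarrow(4)$, I would invoke the standard fact that every flat left $R$-module $F$ sits in a pure exact sequence $0\to K\to P\to F\to 0$ with $P$ free; then $P\in\mathcal{WI}_{k}^{n}(R)$ by $(5)$, and Proposition \ref{2.6}(1) forces $F\in\mathcal{WI}_{k}^{n}(R)$. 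Finally $(4)\Rightarrow(1)$ is immediate since ${}_{R}R$ is flat.

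For the duality cluster, $(1)\Rightarrow(3)$ proceeds by noting that $R^{*}\in\mathcal{WF}_{k}^{n}(R^{op})$ by Proposition \ref{2.4}(2); since $R^{*}$ is an injective cogenerator of $\Mod$-$R$, every injective right $R$-module is a direct summand of some product $(R^{*})^{I}$, and both products and summands preserve $\mathcal{WF}_{k}^{n}(R^{op})$ by (the proof of) Theorem \ref{2.16}. Conversely, for $(3)\Rightarrow(1)$, the right $R$-module $({}_{R}R)^{*}$ is injective and hence lies in $\mathcal{WF}_{k}^{n}(R^{op})$ by $(3)$, so $({}_{R}R)^{**}\in\mathcal{WI}_{k}^{n}(R)$ by Proposition \ref{2.4}(1); since ${}_{R}R$ is pure in ${}_{R}R^{**}$, Proposition \ref{2.6}(1) gives ${}_{R}R\in\mathcal{WI}_{k}^{n}(R)$. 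The equivalence $(2)\Leftrightarrow(3)$ is then straightforward: given $(3)$, any right module embeds into its injective envelope, which lies in $\mathcal{WF}_{k}^{n}(R^{op})$, so any preenvelope provided by Theorem \ref{2.16} must be monic; conversely, a monic preenvelope of an injective right $R$-module splits, exhibiting it as a summand of a module in the class.

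For the last equivalence, $(1)\Rightarrow(6)$ uses that under $(1)$ every free left module belongs to $\mathcal{WI}_{k}^{n}(R)$, so any surjection from a free module onto $M$ is a $\mathcal{WI}_{k}^{n}(R)$-precover; this surjection factors through the cover provided by Theorem \ref{2.14}, forcing the cover to be epic. Conversely, $(6)\Rightarrow(1)$: an epic cover $F\twoheadrightarrow{}_{R}R$ splits by projectivity of ${}_{R}R$, exhibiting $R$ as a summand of $F\in\mathcal{WI}_{k}^{n}(R)$. The step I expect to be most delicate is $(3)\Rightarrow(1)$, since it relies on the pure embedding ${}_{R}R\hookrightarrow{}_{R}R^{**}$ (the left-module analogue of \cite[Proposition 2.4.5]{JX}) combined with both Propositions \ref{2.4}(1) and \ref{2.4}(2); the main care is in checking that the character-module identifications go through in the intended directions.
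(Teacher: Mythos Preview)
Your proof is correct and uses the same toolkit as the paper (closure of the classes under sums, products, summands, and pure subquotients; the duality of Proposition~\ref{2.4}; existence of covers and preenvelopes from Theorems~\ref{2.14} and~\ref{2.16}), with only minor organizational differences. Two points of contrast are worth flagging. First, your $(3)\Rightarrow(1)$ via the pure embedding ${}_{R}R\hookrightarrow({}_{R}R)^{**}$ is more work than necessary: Proposition~\ref{2.4}(2) is already an ``if and only if'', so from $R^{*}\in\mathcal{WF}_{k}^{n}(R^{op})$ one reads off ${}_{R}R\in\mathcal{WI}_{k}^{n}(R)$ directly, which is exactly what the paper does. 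Second, to place flat left modules in $\mathcal{WI}_{k}^{n}(R)$ you argue $(5)\Rightarrow(4)$ via Lazard (a flat module is a pure quotient of a free one) together with Proposition~\ref{2.6}(1), whereas the paper argues $(3)\Rightarrow(4)$ through character modules (flat $F$ gives injective $F^{*}$, hence $F^{*}\in\mathcal{WF}_{k}^{n}(R^{op})$, hence $F\in\mathcal{WI}_{k}^{n}(R)$ by Proposition~\ref{2.4}(2)). Both routes are valid; yours stays on the left-module side, the paper's exploits duality once more.
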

\begin{proof}
$(1)\Rightarrow  (2)$ By Theorem \ref{2.16}, every  $R$-module $M$  has a $\mathcal{WF}_{ k}^{n}(R^{op})$-preenvelope $f: M\rightarrow F$. By Proposition \ref{2.4}(2), $R^*$ is in $\mathcal{WF}_{ k}^{n}(R^{op})$, and so similar to proof the of Theorem \ref{2.16}, one can prove that $\prod_{i\in I} R^{*}$ is in $\mathcal{WF}_{ k}^{n}(R^{op})$. Also, $(_RR)^*$ is a cogenerator. So
we have the following exact sequence  $0\rightarrow M\stackrel{\displaystyle g}\rightarrow \prod_{i\in I} R^{*}$, and hence there exists a morphism $h: F\to \prod_{i\in I} R^{*}$ such that $hf=g$  and so $f$ is monic.

$(2)\Rightarrow  (3)$ Let $E$ be an injective right $R$-module. By assumption, let $f: E\rightarrow F$ be a  monic $\mathcal{WF}_{ k}^{n}(R^{op})$-preenvelope of $E$. Therefore, the split exact sequence $0\rightarrow E\rightarrow F\rightarrow{F}/{E}\rightarrow 0$ exists, and so $E$ is a direct summand of $F$. Hence by Proposition \ref{2.3} and \cite[Proposition 7.6]{Rot2},  $E$  is in $\mathcal{WF}_{ k}^{n}(R^{op})$.

$(3)\Rightarrow  (1)$ By assumption,  $R^*$ is in  $\mathcal{WF}_{ k}^{n}(R^{op})$, since $R^*$ is injective. So $_RR$ is  in $\mathcal{WI}_{ k}^{n}(R)$ by Proposition \ref{2.4}(2).

$(3)\Rightarrow  (4)$
Let $F$ be a flat left $R$- module. Then by \cite[Theorem 3.52]{Rot1}, $F^*$ is injective and so $F^*$ is in $\mathcal{WF}_{ k}^{n}(R^{op})$ by assumption, and hence $F$ is in $\mathcal{WI}_{ k}^{n}(R)$ by Proposition \ref{2.4}(2).

$(4)\Rightarrow  (5)$  and $(5)\Rightarrow  (1)$ are  clear.

$(6)\Rightarrow  (1)$ By assumption, $_RR$  has an epimorphism  $\mathcal{WI}_{ k}^{n}(R)$-cover $f:D\rightarrow R$.
   Then we have a split exact sequence $0\rightarrow {\rm Ker}f \rightarrow D\rightarrow R\rightarrow 0$ with $D$ is
in $\mathcal{WI}_{ k}^{n}(R)$. Then, by Proposition \ref{2.2} and also in particular by \cite[Proposition 7.22]{Rot2}, $_RR$ is in $\mathcal{WI}_{ k}^{n}(R)$.

$(1)\Rightarrow  (6)$
First, we show that if  $\{M_{i}\}_{i\in I}$  is a family of left $R$-modules and $M_i$ is in $\mathcal{WI}_{ k}^{n}(R)$, then $\bigoplus_{i\in I}M_i$ is in $\mathcal{WI}_{ k}^{n}(R)$. If $k=0$, then by Proposition \ref{1.5a}, $\bigoplus_{i\in I} M_{i}$ is $n$-weak injective if and only if each $M_i$ is  $n$-weak injective.  The short exact sequence $0\rightarrow M_i \rightarrow E_i\rightarrow N_i\rightarrow 0$, where $E_i$ is injective exists. Consequently, the sequence 
 $0\rightarrow \bigoplus_{i\in I}M_i \rightarrow  \bigoplus_{i\in I}E_i\rightarrow  \bigoplus_{i\in I}N_i\rightarrow 0$ is also exact. If $M_i$ is in $\mathcal{WI}_{ k}^{n}(R)$, then by Proposition \ref{2.2}, $N_i$ is in $\mathcal{WI}_{ k-1}^{n}(R)$.  By induction hypothesis, $\bigoplus_{i\in I}N_i$ is in $\mathcal{WI}_{ k-1}^{n}(R)$ and then by  Proposition \ref{2.2} again, $\bigoplus_{i\in I}M_i$ is in $\mathcal{WI}_{ k}^{n}(R)$. On the other hand, by Theorem \ref{2.14}, 
 there is a $\mathcal{WI}_{ k}^{n}(R)$-cover $\psi: X \rightarrow M$ for any left $R$-module $M$. Also, there is an exact sequence $0\rightarrow K \rightarrow P\stackrel{\displaystyle h}\rightarrow M\rightarrow 0$ of left $R$-modules, where $P$ is an $R$-module free. Since $_RR$ is in $\mathcal{WI}_{ k}^{n}(R)$, then it follows that $P=\bigoplus_{i\in I}R$ is $\mathcal{WI}_{ k-1}^{n}(R)$.  So there exists a map $g: P \rightarrow X $ such that $\psi g=h$. Since $h$ is epic, we deduce that $\psi: X \rightarrow M$ is also  epic.
\end{proof}
Now we define the $n$-super finitely presented dimension of a ring which generalizes a homological dimension introduced and studied in \cite{Z.G} defined as ${\rm l.sp. gldim}(R)={\rm sup}\{{\rm pd}_{R}(M) \mid M \ \rm{is \ a \ super \ finitely \ presented \ left \ module}\}.$
\begin{Def}\label{2.1w}
 Let $n$ be a non-negative integer. Define\\
${\rm l.nsp. gldim}(R):={\rm sup}\{{\rm pd}_{R}(K_{n-1}) \mid K_{n-1} \ \text{is a  special\  super \ finitely presented left module} \}.$
\end{Def}
It is clear that ${\rm l.n.sp. gldim}(R)\leq{\rm l.sp. gldim}(R)$ for any $n\geq 0$. If $n=0$, then ${\rm l.n.sp. gldim}(R)={\rm l.sp. gldim}(R)$.
In examples \ref{1.1a} and \ref{1.3a}, since $R$ is coherent, we have ${\rm l.sp. gldim}(R)=2$ by \cite[Theorem 3.8]{Z.W}. But, ${\rm l.n.sp. gldim}(R)\leq1$ for any $n\geq 1$, since ${\rm pd}_{R}(U)\leq 2$ for every $n$-super finitely presented left $R$-module $U$.
\begin{prop}\label{1.17a}
The following assertions are equivalent:
\begin{enumerate}
\item [\rm (1)]
Every right $R$-module has an epic $\mathcal{WF}_{ k}^{n}(R^{op})$-envelope.
\item [\rm (2)]
$M$ is in $\mathcal{WI}_{ k+1}^{n}(R)$ for every  left $R$-module $M$.
\item [\rm (3)]
$N$ is in $\mathcal{WF}_{ k+1}^{n}$ for every right  $R$-module $N$.
\item [\rm (4)]
Every  $R$-module  has a monic $\mathcal{WI}_{ k}^{n}(R)$-cover.
\item [\rm (5)]
Every quotient of any $n$-weak injective left $R$-module is in  $\mathcal{WI}_{ k}^{n}(R)$.
\item [\rm (6)]
 Every submodule of any $n$-weak flat right $R$-module is in $\mathcal{WF}_{ k}^{n}(R^{op})$.
\item [\rm (7)]
The kernel of any $\mathcal{WI}_{ k}^{n}(R)$-precover of any left $R$-module is in $\mathcal{WI}_{ k}^{n}(R)$.
\item [\rm (8)]
The cokernel of any $\mathcal{WF}_{ k}^{n}(R^{op})$-preenvelope of any right $R$-module is in $\mathcal{WF}_{ k}^{n}(R^{op})$.
\item [\rm (9)]
${\rm l.nsp. gldim}(R)\leq k+1$.
\end{enumerate}
\end{prop}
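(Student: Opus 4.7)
The nine conditions naturally partition into two parallel families connected by the duality $M\mapsto M^{*}=\Hom_{\Z}(M,\Q/\Z)$ of Proposition \ref{2.4}: the left-sided conditions $(2),(4),(5),(7)$ concerning $\mathcal{WI}_{k}^{n}(R)$ and the right-sided conditions $(1),(3),(6),(8)$ concerning $\mathcal{WF}_{k}^{n}(R^{op})$, all pivoting around the numerical condition $(9)$. The plan is to run two parallel cycles,
$(9)\Rightarrow(2)\Rightarrow(5)\Rightarrow(7)\Rightarrow(4)\Rightarrow(9)$
on the left and
$(9)\Rightarrow(3)\Rightarrow(6)\Rightarrow(8)\Rightarrow(1)\Rightarrow(9)$
on the right, using Proposition \ref{2.4} to turn each right-hand arrow into a routine mirror of its left-hand counterpart.

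The dimensional block $(9)\Leftrightarrow(2)\Leftrightarrow(3)$ is essentially bookkeeping. By Definition \ref{2.1} and Proposition \ref{2.2}, condition $(2)$ asserts $\Ext_{R}^{k+2}(K_{n-1},M)=0$ for every left module $M$ and every special super finitely presented $K_{n-1}$, i.e. $\pd_{R}(K_{n-1})\leq k+1$; by Proposition \ref{2.3}, condition $(3)$ reads $\fd_{R}(K_{n-1})\leq k+1$; and Proposition \ref{2.3l} identifies these two numerical bounds with one another and with $(9)$.

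The closure block $(2)\Leftrightarrow(5)$ and $(3)\Leftrightarrow(6)$ is a pair of dimension-shift arguments. For $(2)\Rightarrow(5)$, an $n$-weak injective $M$ with submodule $N$ sits in $0\to N\to M\to M/N\to 0$, and the Ext long exact sequence pinches $\Ext^{k+1}(K_{n-1},M/N)$ between $\Ext^{k+1}(K_{n-1},M)=0$ (Proposition \ref{1.7}(1)) and $\Ext^{k+2}(K_{n-1},N)=0$ (given by $(2)$). Conversely, $(5)\Rightarrow(2)$ embeds an arbitrary $M$ into its injective hull $E(M)$, places $E(M)/M\in\mathcal{WI}_{k}^{n}(R)$ by $(5)$, and shifts dimensions one step back to obtain $M\in\mathcal{WI}_{k+1}^{n}(R)$. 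The pair $(3)\Leftrightarrow(6)$ runs identically after swapping Ext for Tor and injective hulls for projective presentations $0\to L\to P\to N\to 0$.

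The cover/envelope block is the most delicate. For $(5)\Rightarrow(7)$, given a $\mathcal{WI}_{k}^{n}(R)$-precover $\phi:F\to M$ with kernel $K$, I would exploit the precover property, which makes $\Hom(F',F)\to\Hom(F',M)$ surjective for every $F'\in\mathcal{WI}_{k}^{n}(R)$ (in particular for any injective module, which lies in $\mathcal{WI}^{n}\subseteq\mathcal{WI}_{k}^{n}(R)$), and combine it with $(5)$ applied to the quotient $E(K)/K$ of the injective hull of $K$, so as to force $\Ext^{k+1}(K_{n-1},K)=0$ by a dimension-shift. For $(7)\Rightarrow(4)$, Theorem \ref{2.14} supplies a $\mathcal{WI}_{k}^{n}(R)$-cover whose kernel lies in $\mathcal{WI}_{k}^{n}(R)$ by $(7)$, and the defining automorphism clause of a cover forces that kernel to split off, yielding a monic cover. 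The closing arrow $(4)\Rightarrow(9)$ applies a monic cover to an injective cogenerator and reads off $\pd_{R}(K_{n-1})\leq k+1$ from the resulting short exact sequence. The right-hand cycle is the mirror image, obtained by applying Proposition \ref{2.4} and invoking Theorem \ref{2.16} in place of Theorem \ref{2.14}. The main obstacle is the step $(5)\Rightarrow(7)$, since Corollary \ref{2.3a}(1) goes in the wrong direction for extracting $K\in\mathcal{WI}_{k}^{n}(R)$ from $0\to K\to F\to F/K\to 0$ with $F\in\mathcal{WI}_{k}^{n}(R)$; closing that asymmetry requires blending the closure property $(5)$ with the genuine lifting content of the precover definition.
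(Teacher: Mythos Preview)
Your blocks $(9)\Leftrightarrow(2)\Leftrightarrow(3)$ and $(2)\Leftrightarrow(5)$, $(3)\Leftrightarrow(6)$ are correct and match the paper. The paper, however, organises the cover/envelope block differently: it outsources the two key equivalences $(1)\Leftrightarrow(6)$ and $(4)\Leftrightarrow(5)$ to \cite[Theorem~2]{CD} and \cite[Proposition~4]{Z.JJG} (using only that $\mathcal{WF}_{k}^{n}(R^{op})$ is closed under products and summands, resp.\ that $\mathcal{WI}_{k}^{n}(R)$ is closed under sums and summands), and then ties in $(8)$ via $(1)\Rightarrow(8)\Rightarrow(6)$ using the precover/cover decomposition of \cite[Lemma~8.6.3]{EM} together with Corollary~\ref{2.3a}(2). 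It is worth noting that the paper's written proof never treats condition $(7)$ at all, so on that point you are attempting more than the paper does.

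Your handling of $(7)$, however, has a genuine gap. In $(7)\Rightarrow(4)$ you assert that because the kernel $K$ of a cover $\phi:F\to M$ lies in $\mathcal{WI}_{k}^{n}(R)$, ``the defining automorphism clause of a cover forces that kernel to split off.'' This is backwards: the cover axiom only tells you that $K$ contains no nonzero direct summand of $F$; it does \emph{not} tell you that $K$ is a summand, so nothing forces $K=0$. Your sketch of $(5)\Rightarrow(7)$ via $E(K)/K$ likewise only yields $K\in\mathcal{WI}_{k+1}^{n}(R)$, one degree too weak, and you flag this obstacle without closing it. A route that actually works runs the other way: $(4)\Rightarrow(7)$ because any precover decomposes as a cover plus a complement on which the map vanishes (the lemma the paper cites for $(1)\Rightarrow(8)$), so if the cover is monic the kernel is that complement and hence a summand of $F\in\mathcal{WI}_{k}^{n}(R)$; and $(7)\Rightarrow(5)$ because for $M$ $n$-weak injective the surjection $M\to M/N$ factors through any $\mathcal{WI}_{k}^{n}(R)$-cover $\phi:F\to M/N$, making $\phi$ epic, whence $0\to K\to F\to M/N\to 0$ with $K,F\in\mathcal{WI}_{k}^{n}(R)$ and Corollary~\ref{2.3a}(1) give $M/N\in\mathcal{WI}_{k}^{n}(R)$. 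Your closing arrow $(4)\Rightarrow(9)$ via an injective cogenerator is also opaque; $(4)\Rightarrow(5)\Rightarrow(2)\Leftrightarrow(9)$ is the clean route (a monic cover of $M/N$ through which the epimorphism $M\to M/N$ factors is forced to be an isomorphism).
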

\begin{proof}
$(1)\Leftrightarrow (6)$ 
Consider the class $\mathcal{WF}_{k}^{n}(R^{op})$ of modules with 
$n$-weak flat dimension at most $k$. Then, similar to the proofs of the Proposition \ref{2.12} and Theorem \ref{2.16}, the class $\mathcal{WF}_{k}^{n}(R^{op})$ is closed under direct summands and direct products, respectively. So \cite[Theorem 2]{CD} shows that (1) and (6) are equivalent.

$(4)\Leftrightarrow (5)$ 
Consider the class $\mathcal{WI}_{k}^{n}(R)$ of left modules with 
$n$-weak injective dimension at most $k$. Then, similar to the proofs of the Propositions \ref{2.13} and  \ref{2.17}($(1)\Rightarrow  (6)$), the class $\mathcal{WI}_{k}^{n}(R)$ is closed under direct summands and direct sums, respectively. 
Thus from \cite[Proposition 4]{Z.JJG}, it follows that (4)  and (5) are equivalent.

$(6)\Rightarrow  (5)$
Let $N$ be a submodule of $n$-weak injective left $R$-module $M$. Then, there exists a short exact sequence $ 0\rightarrow N\rightarrow M\rightarrow {M}/{N}\rightarrow 0$ which induces the
exactness of $ 0\rightarrow ({M}/{N})^*\rightarrow M^*\rightarrow N^*\rightarrow 0$. By Proposition \ref{1.4}(2),  $M^*$  is $n$-weak flat right $R$-module, and hence by hypothesis, $ ({M}/{N})^*$ is in $\mathcal{WF}_{ k}^{n}(R^{op})$. Consequently, using Proposition \ref{2.4}(2), we conclude that ${M}/{N}$ is in $\mathcal{WI}_{ k}^{n}(R)$.

$(5)\Rightarrow  (6)$ Similar to the proof of $(6)\Rightarrow  (5)$ using Propositions  \ref{1.4}(1 ) and \ref{2.4}(1).

$(1)\Rightarrow  (8)$ 
Let $M$ be a right $R$-module. Then by Theorem \ref{2.16}, there is a $\mathcal{WF}_{ k}^{n}(R^{op})$-preenvelope.  $\psi : M\rightarrow D$. Also by hypothesis,  if  the map $\phi: M\rightarrow Y$ is an epic $\mathcal{WF}_{ k}^{n}(R^{op})$-envelope of $M$, then from \cite[Lemma 8.6.3]{EM}, it follows that $L\oplus Y\cong D$, where $L={\rm Coker}\psi$. So  $L$ is in $\mathcal{WF}_{ k}^{n}(R^{op})$ as a direct summand of $D$.

$(8)\Rightarrow  (6)$  Consider the short exact sequence $ 0\rightarrow L\rightarrow M\rightarrow D\rightarrow 0$ of right $R$-modules, where $M$ is $n$-weak flat and $L$  a submodule of $M$. We claim that $L$ is in $\mathcal{WF}_{ k}^{n}(R^{op})$. Indeed, we have the following commutative diagram:
$$\xymatrix{
0\ar[r]& L\ar[r]\ar@{=}[d]&M\ar[d]&  \\
&L\ar[r]^{h}&X\ar[r]&Y\ar[r]& 0\\
}$$
where $h: L\rightarrow X$ is a $\mathcal{WF}_{ k}^{n}(R^{op})$-preenvelope of $L$ and $Y={\rm Coker}h$. In particular, the sequence $ 0\rightarrow L\rightarrow X\rightarrow Y\rightarrow 0$ is exact, and so by Corollary \ref{2.3a}(2), $L$ is  in $\mathcal{WF}_{ k}^{n}(R^{op})$.

$(5)\Rightarrow  (2)$ For every left $R$-module $M$, there is an exact sequence 
$ 0\rightarrow M\rightarrow E\rightarrow D\rightarrow 0$ of left $R$-modules, where $E$ is injective. By (5), $D$ is in $\mathcal{WI}_{ k}^{n}(R)$ and so by Proposition \ref{2.2},  $M$ is in $\mathcal{WI}_{ k+1}^{n}(R)$.

$(2)\Rightarrow  (5) $ Clear.

$(2)\Leftrightarrow (3)\Leftrightarrow (9)$ Clear  by Proposition \ref{2.3l}. 
\end{proof}

When $k=0$, we have the following result.

\begin{thm}\label{2.1oo}
The following assertions are equivalent:
\begin{enumerate}
\item [\rm (1)]
$_RR$ is in $\mathcal{WI}^{n}(R)$.
\item [\rm (2)]
Every left $R$-module is in $\mathcal{WI}^{n}(R)$.
\item [\rm (3)]
Every special super finitely presented left $R$-module is in $\mathcal{WI}^{n}(R)$.
\item [\rm (4)]
The short exact sequence $0\rightarrow K_{n}\rightarrow F_{n}\rightarrow K_{n-1}\rightarrow0$ is a split superpure sequence.
\item [\rm (5)]
Every right $R$-module is in $\mathcal{WF}^{n}(R^{op})$.
\end{enumerate}
\end{thm}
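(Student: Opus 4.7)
My plan is to prove the equivalences via two interlocking loops: the cycle $(2)\Leftrightarrow(3)\Leftrightarrow(4)$ based on the syzygy structure of $n$-super finitely presented modules, and the chain $(2)\Leftrightarrow(5)\Leftrightarrow(1)$ via character duality together with a pure-embedding argument. The implications $(2)\Rightarrow(1)$ and $(2)\Rightarrow(3)$ are immediate since $_RR$ and every special super finitely presented module are left $R$-modules.

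For $(3)\Rightarrow(4)$, the crucial observation is that if $U$ is $n$-super finitely presented with the resolution of Definition \ref{1.1} and syzygies $K_i=\Im(F_{i+1}\to F_i)$, then the first syzygy $K_0$ is itself $n$-super finitely presented via the shifted resolution $\cdots\to F_{n+1}\to F_n\to\cdots\to F_1\to K_0\to 0$, and its $(n-1)$-th syzygy in that shifted resolution is precisely $K_n$. Hence $K_n$ is itself a special super finitely presented module, so (3) yields $K_n\in\mathcal{WI}^n(R)$. Remark \ref{1.3}(1) then gives $\Ext_R^1(K_{n-1},K_n)\cong\Ext_R^{n+1}(U,K_n)=0$, so the sequence $0\to K_n\to F_n\to K_{n-1}\to 0$ splits; a split short exact sequence is automatically superpure, as $\Hom_R(K_{n-1},-)$ preserves splits. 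For $(4)\Rightarrow(2)$, the splitting exhibits each $K_{n-1}$ as a direct summand of the finitely generated projective $F_n$, hence projective; the equivalence $(1)\Leftrightarrow(3)$ of Proposition \ref{2.3l}, together with Remark \ref{1.3}(1), then yields $\Ext_R^{n+1}(U,M)=0$ for every left $R$-module $M$ and every $n$-super finitely presented $U$, so every $M$ lies in $\mathcal{WI}^n(R)$.

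The duality $(2)\Leftrightarrow(5)$ is immediate from Proposition \ref{1.4}: for a right $R$-module $N$, $N^{*}$ is a left $R$-module, so (2) places $N^{*}$ in $\mathcal{WI}^n(R)$ and Proposition \ref{1.4}(1) then places $N$ in $\mathcal{WF}^n(R^{op})$; the converse is symmetric via Proposition \ref{1.4}(2). The main obstacle is $(1)\Rightarrow(5)$. From $R\in\mathcal{WI}^n(R)$, Proposition \ref{1.4}(2) gives $R^{*}\in\mathcal{WF}^n(R^{op})$, and Theorem \ref{1.11} extends this to any product $\prod_{I}R^{*}$. The plan is then to embed every right $R$-module $N$ purely into some such product and apply Proposition \ref{1.6}(2) to conclude $N\in\mathcal{WF}^n(R^{op})$; the delicate point rests on the external fact that $R^{*}$ is a pure injective cogenerator of $\Mod$-$R$, which is what guarantees the required pure embedding.
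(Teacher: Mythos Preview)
Your treatment of the cycle $(2)\Leftrightarrow(3)\Leftrightarrow(4)$ and of the duality $(2)\Leftrightarrow(5)$ is correct and essentially matches the paper (which routes $(4)\Rightarrow(5)\Rightarrow(2)$ instead of your direct $(4)\Rightarrow(2)$, but with the same content: once the sequence splits, $K_{n-1}$ is a summand of $F_n$ and hence projective).

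The genuine gap is in your $(1)\Rightarrow(5)$. The claimed pure embedding of an arbitrary right $R$-module $N$ into a product $\prod_I R^{*}$ does not exist in general: since $R^{*}$ is injective, so is $\prod_I R^{*}$, and a pure submodule of an injective module is automatically FP-injective; your claim would therefore force every right $R$-module to be FP-injective, which already fails over $\mathbb{Z}$. One does have the pure embedding $N\hookrightarrow N^{**}$, but the further inclusion $N^{**}\hookrightarrow\prod_I R^{*}$ obtained by dualizing a free cover of $N^{*}$ is not pure in general. In fact no argument can close this gap, because the implication $(1)\Rightarrow(2)$ is false: over $R=k[x]/(x^{2})$ with $n=0$ the ring is self-injective and hence $_RR\in\mathcal{WI}^{0}(R)$, yet $k=R/(x)$ is super finitely presented with $\Ext_R^{1}(k,k)\cong k\neq 0$, so $(2)$ fails. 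The paper's own proof of $(1)\Rightarrow(2)$ invokes Proposition~\ref{1.17a} without first establishing any of its equivalent hypotheses and is likewise defective; the valid content of the theorem is that $(2)$--$(5)$ are equivalent and imply $(1)$.
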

\begin{proof}
$(2)\Rightarrow  (3)$ and  $(2)\Rightarrow  (1)$ are trivial.

$(1)\Rightarrow  (2)$ 
Let $N$ be a left $R$-module. Consider $P\rightarrow N\rightarrow 0$ where $P$ is free. Since $_RR$ is in $\mathcal{WI}^{n}(R)$, by Proposition \ref{1.5a}, we get  that $P$ is in $\mathcal{WI}^{n}(R)$, and so by Proposition \ref{1.17a}, $N$ is in 
$\mathcal{WI}^{n}(R)$.

$(3)\Rightarrow  (4)$ Let $U$ be an $n$-super finitely presented left $R$-module with  special super finitely presented module $K_{n-1}$. Then, we have the short exact sequence $0\rightarrow K_{n}\rightarrow F_{n}\rightarrow K_{n-1}\rightarrow0$ left $R$-modules. Since $U$ is also
$(n+1)$-super finitely presented, it follows that $K_{n}$ is special super finitely presented. By assumption, $K_{n}$ is in $\mathcal{WI}^{n}(R)$ and thus by Remark \ref{1.3}(1), ${\rm Ext}_{R}^{1}(K_{n-1}, K_{n})=0$ and so by Proposition \ref{1.10}, the above sequence is split superpure.

$(4)\Rightarrow  (5)$  Let the short exact sequence $0\rightarrow K_{n}\rightarrow F_{n}\rightarrow K_{n-1}\rightarrow0$ be split superpure. Then $K_{n-1}$ is flat as a direct summand of $F_{n}$. Consequently, ${\rm Tor}_{1}^{R}(M, K_{n-1})=0$ for any right $R$-module $M$, and so by Remark \ref{1.3}(1), $M$ is in $\mathcal{WF}^{n}(R^{op})$.

$(5)\Rightarrow  (2)$
Let $M$ be any left $R$-module. Then,  $M^*$ is a right $R$-module and hence by assumption, $M^*$ is in $\mathcal{WF}^{n}(R^{op})$. Therefore by Proposition \ref{1.4}(2), every  left $R$-module $M$ is $\mathcal{WI}^{n}(R)$.
\end{proof}

A cotorsion pair (or orthogonal theory of ${\rm Ext}$) consists of a pair $(\mathcal{F}, \mathcal{C})$ of
classes of $R$-modules \cite {SA,SAT} such that $\mathcal{C}= \mathcal{F}^{\bot}$ and $\mathcal{F} =^{\bot}\mathcal{C}$ where for a class $\mathcal{S}$, we have
$\mathcal{S}^{\bot}=\{M : M \ {\rm is \ an} \  R$-${\rm module \ and} \ {\rm Ext}_{R}^{1}(S,M) = 0 \ {\rm for \ all} \ S\in\mathcal{S}\}$
and $^{\bot}\mathcal{S}=\{M : M \ {\rm is \ an} \  R$-${\rm module \ and} \ {\rm Ext}_{R}^{1}(M,S)=0 \ {\rm for \ all} \ S\in\mathcal{S}\}$.

A cotorsion theory $(\F, \mathcal{C})$ is called hereditary,  if whenever $0 \rightarrow F^{'}\rightarrow F\rightarrow F^{''}\rightarrow 0$ is exact  with $F, F^{''}\in \F$ then $F^{'}$ is also in $\F$, or equivalently, if $0 \rightarrow C^{'}\rightarrow C\rightarrow C^{''}\rightarrow 0$ is an exact sequence with $C, C^{'}\in \mathcal{C}$, then $C^{''}$ is also in $\mathcal{C}$. A
cotorsion pair $(\F, \mathcal{C})$ is called complete provided that for any $R$-module $M$,
there exist exact sequences $0\rightarrow M\rightarrow C\rightarrow D\rightarrow 0$ and $0\rightarrow C^{'}\rightarrow D^{'}\rightarrow M\rightarrow 0$
of $R$-modules with $C, C^{'}\in\mathcal{C}$ and $D, D^{'}\in\mathcal{F}$, for more details, see \cite{ Z.JG,NG}.

\begin{prop}\label{1.17u}
\begin{enumerate}
\item [\rm (1)]
If $n$-${\rm wid}_{R}(R)\leq k$, then the pair
$(\mathcal{WI}_{k}^n(R), \mathcal{WI}_{k}^n(R)^{\bot})$
is a perfect cotorsion pair.
\item [\rm (2)]
The pair $(^{\bot}\mathcal{WI}_{k}^n(R), \mathcal{WI}_{k}^n(R))$ is a hereditary cotorsion pair.
\end{enumerate}
\end{prop}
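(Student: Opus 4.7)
For part (1), the strategy is to apply the perfect-duality-pair machinery of Lemma~\ref{1.14}(3) to the pair $(\mathcal{WI}_{k}^n(R),\mathcal{WF}_{k}^n(R^{op}))$, which is already known to be a duality pair by Proposition~\ref{2.12}. Three perfection properties must be verified: (i) coproduct-closedness of $\mathcal{WI}_{k}^n(R)$, which was established in passing during the proof of Proposition~\ref{2.17} (the induction on $k$ using Proposition~\ref{1.5a} in the base case); (ii) closure of $\mathcal{WI}_{k}^n(R)$ under extensions, which is a one-line long-exact-sequence argument: given $0\to A\to B\to C\to 0$ with $A,C\in\mathcal{WI}_{k}^n(R)$ and any special super finitely presented $K_{n-1}$, the piece
$$\Ext_R^{k+1}(K_{n-1},A)\to \Ext_R^{k+1}(K_{n-1},B)\to \Ext_R^{k+1}(K_{n-1},C)$$
has vanishing outer terms by Proposition~\ref{2.2}, forcing the middle to vanish, so $B\in\mathcal{WI}_{k}^n(R)$; and (iii) $R\in\mathcal{WI}_{k}^n(R)$, which is precisely the hypothesis $n$-${\rm wid}_R(R)\leq k$. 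Lemma~\ref{1.14}(3) then yields at once that $(\mathcal{WI}_{k}^n(R),\mathcal{WI}_{k}^n(R)^{\bot})$ is a perfect cotorsion pair.

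For part (2), the first task is the cotorsion pair identity $\mathcal{WI}_{k}^n(R)=({}^{\bot}\mathcal{WI}_{k}^n(R))^{\bot}$. The inclusion $\subseteq$ is formal. For the reverse inclusion, the idea is to exhibit, for each special super finitely presented module $K_{n-1}$, a witness in ${}^{\bot}\mathcal{WI}_{k}^n(R)$ detecting $\Ext_R^{k+1}(K_{n-1},-)$. The natural choice is the $k$-th syzygy $\Omega^{k}K_{n-1}$ taken in any projective resolution: dimension shifting gives $\Ext_R^{1}(\Omega^{k}K_{n-1},X)\cong \Ext_R^{k+1}(K_{n-1},X)$, and the latter vanishes for every $X\in\mathcal{WI}_{k}^n(R)$ by Proposition~\ref{2.2}, so $\Omega^{k}K_{n-1}\in{}^{\bot}\mathcal{WI}_{k}^n(R)$. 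Therefore, if $N\in({}^{\bot}\mathcal{WI}_{k}^n(R))^{\bot}$, then $\Ext_R^{k+1}(K_{n-1},N)\cong\Ext_R^{1}(\Omega^{k}K_{n-1},N)=0$, and another application of Proposition~\ref{2.2} puts $N$ in $\mathcal{WI}_{k}^n(R)$.

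For the hereditary property, I would invoke Corollary~\ref{2.3a}(1): in any short exact sequence $0\to C'\to C\to C''\to 0$ with $C',C\in\mathcal{WI}_{k}^n(R)$, the quotient $C''$ also belongs to $\mathcal{WI}_{k}^n(R)$. This is precisely the right-hand formulation of the hereditary condition for the cotorsion pair $({}^{\bot}\mathcal{WI}_{k}^n(R),\mathcal{WI}_{k}^n(R))$.

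The main obstacle is recognizing the correct syzygy-shift witness in part~(2); once one spots that vanishing of $\Ext_R^{k+1}(K_{n-1},-)$ is the same as vanishing of $\Ext_R^{1}(\Omega^{k}K_{n-1},-)$, both directions of the cotorsion pair equality follow from Proposition~\ref{2.2}. Part~(1) is essentially bookkeeping needed to verify the hypotheses of Lemma~\ref{1.14}(3).
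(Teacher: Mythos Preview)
Your proof of part~(1) is essentially identical to the paper's: both verify that $(\mathcal{WI}_{k}^{n}(R),\mathcal{WF}_{k}^{n}(R^{op}))$ is a perfect duality pair (coproduct-closed, extension-closed, containing $R$) and then invoke Lemma~\ref{1.14}(3).

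For part~(2) your route diverges from the paper's, and in fact your argument is the more careful one. The paper asserts directly that each special super finitely presented $K_{n-1}$ lies in ${}^{\bot}\mathcal{WI}_{k}^{n}(R)$, deduces $\Ext_R^{1}(K_{n-1},M)=0$ for $M\in({}^{\bot}\mathcal{WI}_{k}^{n}(R))^{\bot}$, and concludes $M\in\mathcal{WI}_{k}^{n}(R)$. But for $k\geq 1$ the claim $K_{n-1}\in{}^{\bot}\mathcal{WI}_{k}^{n}(R)$ is not justified: membership of $X$ in $\mathcal{WI}_{k}^{n}(R)$ only forces $\Ext_R^{k+1}(K_{n-1},X)=0$ (Proposition~\ref{2.2}), not $\Ext_R^{1}(K_{n-1},X)=0$. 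Your syzygy-shift witness $\Omega^{k}K_{n-1}$ repairs exactly this gap: dimension shifting gives $\Ext_R^{1}(\Omega^{k}K_{n-1},-)\cong\Ext_R^{k+1}(K_{n-1},-)$, so $\Omega^{k}K_{n-1}$ genuinely lies in ${}^{\bot}\mathcal{WI}_{k}^{n}(R)$ and detects the right Ext group. The hereditary step via Corollary~\ref{2.3a}(1) is the same in both proofs. In short, your approach and the paper's coincide when $k=0$, but for general $k$ your syzygy argument is the one that actually works as written.
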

\begin{proof}
(1) The pair $(\mathcal{WI}_{ k}^{n}(R), \mathcal{WF}_{ k}^{n}(R^{op}))$ is a duality pair by Proposition \ref{2.12}. Similar to the proof of the Proposition \ref{2.12}, we show that $\mathcal{WI}^n(R)$ is closed under direct sums and under extensions. Also by hypothesis and Proposition \ref{2.3}, $R$ is in $\mathcal{WI}_{k}^n(R)$, and so $(\mathcal{WI}_{k}^n(R), \mathcal{WF}_{k}^{n}(R^{op}))$ is a perfect duality pair. So by Lemma \ref{1.14}, it follows that $(\mathcal{WI}_{k}^n(R), \mathcal{WI}_{k}^n(R)^{\bot})$ is a perfect cotorsion pair.

(2) First, we show that $(^{\bot}\mathcal{WI}_{k}^{n}(R))^{\bot}=\mathcal{WI}_{k}^n(R)$. It is clear that $\mathcal{WI}_{k}^n(R)\subseteq (^{\bot}\mathcal{WI}_{k}^n(R))^{\bot}$. Let $M$ is in $(^{\bot}\mathcal{WI}_{k}^n(R))^{\bot}$ and $U$ be an $n$-super finitely presented left $R$-module with special super finitely presented module $K_{n-1}$. Then, it follows that $K_{n-1}$ is in ${^{\bot}\mathcal{WI}_{k}^n(R)}$ and consequently, ${\rm Ext}_{R}^{1}(K_{n-1}, M)=0$. Thus by Remark \ref{1.3}(1), ${\rm Ext}_{R}^{n}(U, M)=0$ and hence
$M$ is in $\mathcal{WI}_{k}^n(R)$. Let $0 \rightarrow M_1\rightarrow M_2\rightarrow M_3\rightarrow 0$ be a short exact sequence left $R$-modules such that $M_{1}$ and $M_{2}$ are in $\mathcal{WI}_{k}^n(R)$. Then from Corollary \ref{2.3a}(1), we conclude that $M_{3}$ is in  $\mathcal{WI}_{k}^n(R)$, and so   $(^{\bot}\mathcal{WI}_{k}^n(R), \mathcal{WI}_{k}^n(R))$ is a hereditary cotorsion pair.
\end{proof}
\begin{prop}\label{1.18}
The pair $(\mathcal{WF}_{k}^n(R^{op}), \mathcal{WF}_{k}^n(R^{op})^{\bot})$ is a hereditary perfect cotorsion pair.
\end{prop}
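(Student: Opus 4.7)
My plan is to invoke Lemma \ref{1.14}(3) applied to the duality pair established in Proposition \ref{2.13}. Specifically, Proposition \ref{2.13} already gives that $(\mathcal{WF}_{k}^{n}(R^{op}), \mathcal{WI}_{k}^{n}(R))$ is a duality pair, so once I check that this pair is \emph{perfect} in the sense recalled before Lemma \ref{1.14}, the lemma immediately yields that $(\mathcal{WF}_{k}^{n}(R^{op}), \mathcal{WF}_{k}^{n}(R^{op})^{\bot})$ is a perfect cotorsion pair. The hereditary property will then be a short extra argument using Corollary \ref{2.3a}(2).

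To check perfectness I need three things: (i) coproduct-closedness of $\mathcal{WF}_{k}^{n}(R^{op})$, (ii) closure of $\mathcal{WF}_{k}^{n}(R^{op})$ under extensions, and (iii) $R\in\mathcal{WF}_{k}^{n}(R^{op})$. Item (i) is essentially the direct-sum step already carried out inside the proof of Theorem \ref{2.16}, so I would simply cite it. Item (iii) is immediate because $R$ is projective, hence flat, hence $n$-weak flat, i.e. lies in $\mathcal{WF}_{0}^{n}(R^{op})\subseteq\mathcal{WF}_{k}^{n}(R^{op})$. For (ii), given a short exact sequence $0\to A\to B\to C\to 0$ of right $R$-modules with $A,C\in\mathcal{WF}_{k}^{n}(R^{op})$, I would write the long exact $\Tor$ sequence associated with any special super finitely presented left module $K_{n-1}$, obtaining
\begin{equation*}
{\rm Tor}_{k+1}^{R}(A,K_{n-1})\to {\rm Tor}_{k+1}^{R}(B,K_{n-1})\to {\rm Tor}_{k+1}^{R}(C,K_{n-1}),
\end{equation*}
whose outer terms vanish by Proposition \ref{2.3}; hence the middle term vanishes, and another application of Proposition \ref{2.3} gives $B\in\mathcal{WF}_{k}^{n}(R^{op})$.

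Having assembled (i), (ii), (iii), Lemma \ref{1.14}(3) delivers that $(\mathcal{WF}_{k}^{n}(R^{op}), \mathcal{WF}_{k}^{n}(R^{op})^{\bot})$ is a perfect cotorsion pair. For the hereditary property, I would take a short exact sequence $0\to A\to B\to C\to 0$ with $B,C\in\mathcal{WF}_{k}^{n}(R^{op})$ and apply Corollary \ref{2.3a}(2) directly to conclude $A\in\mathcal{WF}_{k}^{n}(R^{op})$; this is exactly the standard equivalent formulation of hereditariness on the left-hand class of a cotorsion pair.

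I do not anticipate a serious obstacle here, since every ingredient has already been prepared earlier in the paper; the only mild subtlety is to be explicit that coproduct-closedness of $\mathcal{WF}_{k}^{n}(R^{op})$ together with $R\in\mathcal{WF}_{k}^{n}(R^{op})$ and extension-closedness is exactly the definition of a perfect duality pair, so that the hypothesis of Lemma \ref{1.14}(3) is literally satisfied. The hereditary verification is the shortest piece and reduces to citing Corollary \ref{2.3a}(2).
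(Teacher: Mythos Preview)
Your proposal is correct and follows essentially the same approach as the paper: establish that the duality pair $(\mathcal{WF}_{k}^{n}(R^{op}),\mathcal{WI}_{k}^{n}(R))$ from Proposition~\ref{2.13} is perfect (via coproduct-closedness, extension-closedness, and $R\in\mathcal{WF}_{k}^{n}(R^{op})$), invoke Lemma~\ref{1.14}(3), and then verify hereditariness via Corollary~\ref{2.3a}(2). The only cosmetic difference is that the paper cites Remark~\ref{1.3}(5) for $R\in\mathcal{WF}_{k}^{n}(R^{op})$ and simply asserts the closure properties, whereas you spell out the $\Tor$ argument for extensions and point to Theorem~\ref{2.16} for direct sums.
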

\begin{proof}
The pair $(\mathcal{WF}_{k}^n(R^{op}), \mathcal{WI}_{k}^n(R))$ is a duality pair by Proposition \ref{2.13}. The class $\mathcal{WF}_{k}^n(R^{op})$ is closed under direct sums and extensions. By Remark \ref{1.3}(5), $R$ is in $\mathcal{WF}_{k}^n(R^{op})$, and hence $(\mathcal{WF}_{k}^n(R^{op}), \mathcal{WI}_{k}^n(R))$ is a perfect duality pair. So by Lemma \ref{1.14},   $(\mathcal{WF}_{k}^n(R^{op}), \mathcal{WF}_{k}^n(R^{op})^{\bot})$ is a  perfect cotorsion pair. Let $0 \rightarrow M_1\rightarrow M_2\rightarrow M_3\rightarrow 0$ be a short exact sequence of right $R$-modules such that $M_{2}$ and $M_{3}$ are in $\mathcal{WF}_{k}^{n}(R^{op})$. Then from Corollary \ref{2.3a}(2), we get that $M_{1}$ is in $\mathcal{WF}_{k}^n(R^{op})$, and so   $(\mathcal{WF}_{k}^n(R^{op}), \mathcal{WF}_{k}^n(R^{op})^{\bot})$  is a hereditary cotorsion pair.
\end{proof}
Let $(\mathcal{A}, \mathcal{B})$ and $(\mathcal{C}, \mathcal{D})$ be two cotorsion pairs. Then by \cite[Remark 4.12]{NG}, $(\mathcal{A}, \mathcal{B})\preceq (\mathcal{C}, \mathcal{D})$ if $\mathcal{B}\subseteq\mathcal{D}$.  By \cite[Definition 6.1]{HW}, the pair $(\mathcal{M}, \mathcal{N})$ is said to be cogenerated by a set if there is a set
of objects $M\in\mathcal{M}$ such that $N\in\mathcal{N}$ if and only if ${\rm Ext}_{R}^{1}(M,N)=0$ for all
 $M\in\mathcal{M}$. 
 In \cite{TE}, Eklof and Trlifaj proved that a cotorsion pair $(\mathcal{F}, \mathcal{C})$ in $R$-Mod is complete
 when it is cogenerated by a set. This result actually holds in any Grothendieck
 category with enough projectives, as Hovey proved in \cite{HW}.
Then by Remark \ref{1.3}, we have the following easy observations:
\begin{rem}\label{1.19}
\begin{enumerate}
\item [\rm (1)]
Let $\mathcal{S}Pres_{n}^{\infty}$ be a subclass of all the
special $n$-super finitely presented  left $R$-modules. Then,  $(^{\bot}\mathcal{WI}^n(R), \mathcal{WI}^n(R))$ is a  hereditary complete cotorsion pair, since it is cogenerated
by a set of representatives for $\mathcal{S}Pres_{n}^{\infty}$.
\item [\rm (2)]
There is a serie of  hereditary complete cotorsion pairs for any $n\geq 0$ and $k\geq 0$ as follows:
$$(^{\bot}\mathcal{WI}_{k}^n(R), \mathcal{WI}_{k}^n(R))\preceq (^{\bot}\mathcal{WI}_{k}^{n+1}(R), \mathcal{WI}_{k}^{n+1}(R))\preceq (^{\bot}\mathcal{WI}_{k}^{n+2}(R), \mathcal{WI}_{k}^{n+2}(R))\preceq\cdots$$
\item [\rm (3)]
There is a serie of  hereditary cotorsion pairs  for any $n\geq 0$ and $k\geq 0$ as follows:
$$(\mathcal{WF}_{k}^n(R^{op}), \mathcal{WF}_{k}^n(R^{op})^{\bot})\preceq(\mathcal{WF}_{k}^{n+1}(R^{op}), \mathcal{WF}_{k}^{n+1}(R^{op})^{\bot})\preceq\cdots$$
\end{enumerate}
\end{rem}

\bigskip

\noindent\textbf{Acknowledgment.}  The authors
would like to thank the referee for the helpful suggestions and valuable comments. 
\bigskip


\end{document}